\begin{document}
\newtheorem{theorem}{Theorem}[section]
\newtheorem{lemma}[theorem]{Lemma}
\newtheorem{definition}[theorem]{Definition}
\newtheorem{conjecture}[theorem]{Conjecture}
\newtheorem{proposition}[theorem]{Proposition}
\newtheorem{claim}[theorem]{Claim}
\newtheorem{corollary}[theorem]{Corollary}
\newtheorem{observation}[theorem]{Observation}
\newtheorem{problem}[theorem]{Open Problem}

\newtheorem*{theorem_a}{Theorem \ref{thm:main3}a}
\newtheorem*{theorem_b}{Theorem \ref{thm:main3}b}

\theoremstyle{remark}
\newtheorem{rem}[theorem]{Remark}

\newcommand{\noin}{\noindent}
\newcommand{\ind}{\indent}
\newcommand{\om}{\omega}
\newcommand{\pp}{\mathcal P}
\newcommand{\AC}{\mathcal A \mathcal C}
\newcommand{\bAC}{\overline{\AC}}
\newcommand{\ppp}{\mathfrak P}
\newcommand{\N}{{\mathbb N}}
\newcommand{\LL}{\mathbb{L}}
\newcommand{\R}{{\mathbb R}}
\newcommand{\E}{\mathbb E}
\newcommand{\Prob}{\mathbb{P}}
\newcommand{\eps}{\varepsilon}
\newcommand{\G}{{\mathcal{G}}}

\newcommand{\Ss}{{\mathcal S}}
\newcommand{\Nn}{{\mathcal N}}

\newcommand{\ceil}[1]{\left \lceil #1 \right \rceil}
\newcommand{\floor}[1]{\left \lfloor #1 \right \rfloor}
\newcommand{\size}[1]{\left \vert #1 \right \vert}
\newcommand{\dist}{\mathrm{dist}}

\title{Acquaintance time of random graphs near connectivity threshold}

\author{Andrzej Dudek}
\address{Department of Mathematics, Western Michigan University, Kalamazoo, MI, USA}
\email{\tt andrzej.dudek@wmich.edu}
\thanks{The first author is supported in part by Simons Foundation Grant \#244712 and by a grant from the Faculty Research and Creative Activities Award (FRACAA), Western Michigan University.}

\author{Pawe\l{} Pra\l{}at}
\address{Department of Mathematics, Ryerson University, Toronto, ON, Canada}
\email{\tt pralat@ryerson.ca}
\thanks{The second author is supported in part by NSERC and Ryerson University}

\thanks{Work partially done during a visit to the Institut Mittag-Leffler (Djursholm, Sweden)}

\keywords{random graphs, vertex-pursuit games, acquaintance time}
\subjclass{05C80, 05C57, 68R10}

\begin{abstract}
Benjamini, Shinkar, and Tsur stated the following conjecture on the acquaintance time: asymptotically almost surely $\AC(G) \le p^{-1} \log^{O(1)} n$ for a random graph $G \in G(n,p)$, provided that $G$ is connected. Recently, Kinnersley, Mitsche, and the second author made a major step towards this conjecture by showing that asymptotically almost surely $\AC(G) = O(\log n / p)$, provided that $G$ has a Hamiltonian cycle. In this paper, we finish the task by showing that the conjecture holds in the strongest possible sense, that is, it holds right at the time the random graph process creates a connected graph. Moreover, we generalize and investigate the problem for random hypergraphs.
\end{abstract}

\maketitle

\section{Introduction}\label{sec:intro}

In this paper, we study the following graph process, which was recently introduced by Benjamini, Shinkar, and Tsur~\cite{bst}. Let $G=(V,E)$ be a finite connected graph. We start the process by placing one \emph{agent} on each vertex of $G$. Every pair of agents sharing an edge is declared to be \emph{acquainted}, and remains so throughout the process. In each round of the process, we choose some matching $M$ in $G$.  ($M$ need not be maximal; perhaps it is a single edge.) For each edge of $M$, we swap the agents occupying its endpoints, which may cause more agents to become acquainted. The \emph{acquaintance time} of $G$, denoted by $\AC(G)$, is the minimum number of rounds required for all agents to become acquainted with one another.

It is clear that 
\begin{equation}\label{eq:trivial_lower}
\AC(G) \ge \frac {{|V| \choose 2}}{|E|} - 1, 
\end{equation}
since $|E|$ pairs are acquainted initially, and at most $|E|$ new pairs become acquainted in each round. In~\cite{bst}, it was shown that always $\AC(G) = O(\frac{n^2}{\log n / \log \log n})$, where $n = |V|$. A small progress was made in~\cite{KMP} where it was proved that $\AC(G) = O(n^2/\log n )$. This general upper bound was recently improved in~\cite{AS} to $\AC(G) = O(n^{3/2})$, which was conjectured in~\cite{bst} and is tight up to a multiplicative constant. Indeed, for all functions $f : \N \rightarrow \N$ with $1 \le f(n) \le n^{3/2}$, there are families $\{G_n\}$ of graphs with $\size{V(G_n)} = n$ for all $n$ such that $\AC(G_n) = \Theta(f_n)$. The problem is similar in flavour to the problems of Routing Permutations on Graphs via Matchings~\cite{ACG94}, Gossiping and Broadcasting~\cite{HHL88}, and Target Set Selection~\cite{KKT03, Che09, Rei12}.

\bigskip

A \emph{hypergraph} $H$ is an ordered pair $H=(V,E)$, where $V$ is a finite set (the \emph{vertex set}) and $E$ is a family of disjoint subsets of $V$ (the \emph{edge set}). A hypergraph $H=(V,E)$ is \emph{$r$-uniform} if all edges of $H$ are of size $r$.
In this paper, we consider the acquaintance time of random $r$-uniform hypergraphs, including binomial random graphs. For a given $r \in \N \setminus \{1\}$, the \emph{random $r$-uniform hypergraph} $H_r(n,p)$ has $n$ labelled vertices from a vertex set $V = [n] :=\{1,2,\dots,n\}$, in which every subset $e \subseteq V$ of size $|e|=r$ is chosen to be an edge of $H$ randomly and independently with probability $p$. For $r=2$, this model reduces to the well known and thoroughly studied model $G(n,p)$ of \emph{binomial random graphs}. (See, for example, \cite{bol,JLR}, for more.)  Note that $p=p(n)$ may tend to zero (and usually does) as $n$ tends to infinity. All asymptotics throughout are as $n \rightarrow \infty $ (we emphasize that the notations $o(\cdot)$ and $O(\cdot)$ refer to functions of $n$, not necessarily positive, whose growth is bounded). We say that an event in a probability space holds \emph{asymptotically almost surely} (or \emph{a.a.s.}) if the probability that it holds tends to $1$ as $n$ goes to infinity. 

\smallskip

Let $G \in G(n,p)$ with $p = p(n) \ge (1-\eps) \log n / n$ for some $\eps > 0$. Recall that $\AC(G)$ is defined only for connected graphs, and $\log n / n$ is the sharp threshold for connectivity in $G(n,p)$---see, for example,~\cite{bol, JLR} for more. Hence, we will not be interested in sparser graphs. It follows immediately from Chernoff's bound that a.a.s.\ $|E(G)| = (1+o(1)) {n \choose 2} p$. Hence, from the trivial lower bound~(\ref{eq:trivial_lower}) we have that a.a.s.\ $\AC(G) = \Omega(1/p)$. Despite the fact that no non-trivial upper bound on $\AC(G)$ was known, it was conjectured in~\cite{bst} that a.a.s.\ $\AC(G) = O(\text{poly} \log(n) / p)$. Recently, Kinnersley, Mitsche, and the second author of this paper made a major step towards this conjecture by showing that a.a.s.\ $\AC(G) = O(\log n / p)$, provided that $G$ has a Hamiltonian cycle, that is, when $pn - \log n -\log \log n \to \infty$. (See~\cite{KMP} for more.)

In this paper, we finish the task by showing that the conjecture holds above the threshold for connectivity.

\begin{theorem}\label{thm:main2}
Suppose that $p=p(n)$ is such that $pn - \log n \to \infty$. For $G \in G(n,p)$, a.a.s.
$$
\AC(G) = O \left( \frac {\log n}{p} \right).
$$
\end{theorem}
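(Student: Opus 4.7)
The plan is to reduce to the Kinnersley--Mitsche--Pra\l at (KMP) theorem cited in the introduction, which already gives $\AC(G)=O(\log n/p)$ when $G\in G(n,p)$ is Hamiltonian, by separately handling the few vertices of very low degree that prevent Hamiltonicity when $pn-\log n$ does not grow faster than $\log\log n$. Write $pn=\log n+\omega$ with $\omega\to\infty$, fix a large constant $d$, and let $L=\{v:\deg_G(v)<d\}$ and $H=V\setminus L$. Routine first-moment computations should show that a.a.s.\ $|L|=(\log n)^{O(1)}$, $L$ is an independent set with pairwise distance at least $3$, and every $v\in H$ has at most $O(1)$ neighbours in $L$; consequently $G[H]$ has minimum degree $\ge d-O(1)$ on $|H|=n-o(n)$ vertices. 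A standard Hamiltonicity argument for near-random graphs of large minimum degree (e.g., a P\'osa rotation-extension carried out on $G[H]$) then yields that $G[H]$ a.a.s.\ contains a Hamilton cycle.

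The heart of the plan is to apply the KMP theorem on $G[H]$: using only matchings in $G[H]$, all agents whose positions lie in $H$ throughout the phase become pairwise acquainted in $O(\log n/p)$ rounds, and (possibly after an $O(\log n)$-round reset) we may assume they return to their starting positions. To incorporate the $L$-agents, fix an injection $\phi:L\to H$ with $\phi(v)\in N_G(v)$, which exists by the spread-out property of $L$. Then execute the three-step routine: (i) one round of swaps along the matching $\{\{v,\phi(v)\}:v\in L\}$; (ii) a second KMP phase on $G[H]$; (iii) the reverse swap. During the middle phase each $L$-agent sits in $H$ and therefore becomes acquainted with every other $L$-agent and every undisplaced $H$-agent. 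The only missing pairs are of the form $(\phi(v),w)$ with distinct $v,w\in L$; as there are at most $|L|^2=(\log n)^{O(1)}$ of them and $G$ has diameter $O(\log n/\log\log n)$, each can be serviced by routing agent $\phi(v)$ along a short path to $w$, for a total extra cost of $(\log n)^{O(1)}$, comfortably inside the $O(\log n/p)$ budget in the only regime where $p$ is $\Theta(\log n /n)$ (otherwise KMP already applies directly to $G$).

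The main obstacle will be the structural input: one must argue that $G[H]$ is Hamiltonian with enough randomness that the KMP proof transfers as a black box. This is delicate because $G[H]$ is conditioned on a degree lower bound at each of its vertices and so is not itself a binomial random graph. A natural workaround is a two-round exposure, first revealing edges inside $H$ and then edges incident to $L$, or alternatively verifying the KMP strategy directly on any graph that enjoys the expansion properties typical of $G\in G(n,p)$ just above the connectivity threshold. A secondary issue is confirming that the KMP machinery can be used as a black box that returns agents to their home positions; if the published proof does not immediately yield this, an additional $O(\log n)$-round permutation-reversal phase suffices, since $\log n=O(\log n/p)$.
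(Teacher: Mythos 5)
Your reduction has a genuine gap at exactly the point you flag as ``the main obstacle,'' and flagging it does not close it. The Kinnersley--Mitsche--Pra\l{}at result is not a deterministic statement of the form ``every Hamiltonian graph $G$ satisfies $\AC(G)=O(\log n/p)$'' (that would be false --- a Hamilton cycle alone only gives $\AC=O(n)$, and the $O(\log n/p)$ bound needs the random edges off the cycle). It is a statement about $G\in G(n,p)$ in the regime $pn-\log n-\log\log n\to\infty$, proved using distributional properties of the binomial model. The graph $G[H]$ you want to feed it is obtained by conditioning on a degree event at every vertex and is not a binomial random graph, so the theorem cannot be invoked as a black box. Neither proposed repair is routine: a two-round exposure does not cleanly separate ``edges inside $H$'' from ``edges incident to $L$'' because $H$ itself is defined by degrees in the full graph; and re-verifying the KMP strategy on an arbitrary graph with ``expansion properties typical of $G(n,p)$'' amounts to reproving their theorem in a deterministic/pseudorandom form, which is a substantial piece of work you have not done. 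The Hamiltonicity of $G[H]$ is likewise asserted via ``a standard P\'osa argument'' but near the connectivity threshold this is a real theorem (of Koml\'os--Szemer\'edi/Bollob\'as type for the graph with low-degree vertices deleted), not a routine computation. The surrounding combinatorics of your plan --- $|L|=(\log n)^{O(1)}$, $L$ spread out, the injection $\phi$, the swap--KMP--swap sandwich, and the $|L|^2$ leftover pairs handled by short routings --- is sound, but it all rests on the two unproved structural inputs.

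For contrast, the paper takes a different route that avoids KMP entirely and proves more: it shows (Theorem~\ref{thm:main1}) that already at the hitting time for connectivity the graph a.a.s.\ contains a \emph{good spanning tree} --- a path on $0.99n$ vertices found by a depth-first-search argument (Lemma~\ref{lem:alg}), augmented by a matching to ``heavy'' vertices and pendant ``light'' vertices --- and that any good tree has linear acquaintance time with every agent traversing the spine (Lemma~\ref{lem:structure}). Theorem~\ref{thm:main2} then follows by monotonicity of $\AC$ under adding edges together with the $\G(n,m)$-to-$G(n,p)$ transfer (Lemma~\ref{lem:gnm_to_gnp}). If you want to salvage your approach, the honest version is to replace the KMP black box by an explicit strategy on a long path or cycle inside $G$ plus a union bound over pairs of agents using the unexposed random edges --- which is, in effect, what the paper's good-tree argument does.
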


In fact, we prove the conjecture in the strongest possible sense, that is, we show that it holds right at the time the random graph process creates a connected graph. Before we state the result, let us introduce one more definition. We consider the Erd\H{o}s-R\'enyi \emph{random graph process}, which is a stochastic process that starts with $n$ vertices and no edges, and at each step adds one new edge chosen uniformly at random from the set of missing edges. Formally, let $N={n \choose 2}$ and let $e_1, e_2, \ldots, e_N$ be a random permutation of the edges of the complete graph $K_n$. The graph process consists of the sequence of random graphs $( \G(n,m) )_{m=0}^{N}$, where $\G(n,m)=(V,E_m)$, $V = [n]$, and $E_m = \{ e_1, e_2, \ldots, e_m \}$. It is clear that $\G(n,m)$ is a graph taken uniformly at random from the set of all graphs on $n$ vertices and $m$ edges. (As before, see, for example,~\cite{bol, JLR} for more details.)

Note that in the missing window in which the conjecture is not proved, we have $p=(1+o(1)) \log n / n$ and so an upper bound of $O(\log n / p)$ is equivalent to $O(n)$. Hence, it is enough to show the following result, which implies Theorem~\ref{thm:main2} as discussed at the beginning of Section~\ref{sec:graphs}.

\begin{theorem}\label{thm:main1}
Let $M$ be a random variable defined as follows:
$$
M = \min \{ m : \text{$\G(n,m)$ is connected} \}.
$$
Then, for $G \in \G(n,M)$, a.a.s.
$$
\AC(G) = O \left( n \right).
$$
\end{theorem}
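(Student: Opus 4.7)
The plan is to combine a structural decomposition of $G = \G(n,M)$ with the Kinnersley--Mitsche--Pra\l{}at (KMP) acquaintance strategy for Hamiltonian graphs. The idea is to isolate a small set $L$ of low-degree vertices whose removal leaves a Hamiltonian core $C$, run the KMP strategy on $G[C]$, and interleave pendant swaps to acquaint the agents initially on $L$.

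First I would show that a.a.s.\ $G = \G(n,M)$ admits a partition $V = C \sqcup L$ satisfying: (i) $|L| = O(\log n)$; (ii) every vertex $v \in L$ has a unique neighbor $u(v) \in C$, the images $u(v)$ are all distinct, and $L$ is independent; (iii) the induced subgraph $G[C]$ contains a Hamilton cycle. Properties (i) and (ii) are routine first-moment estimates, taking $L$ to be the set of vertices of degree below a suitable slowly-growing threshold, e.g.\ $d_0 = (\log n)^{1/3}$. Property (iii) relies on the classical hitting-time theorem for the random graph process: the hitting time for Hamiltonicity coincides with the hitting time for minimum degree at least $2$, so by stripping away $L$ we expose a subgraph that is essentially distributed as a random graph at or past the Hamiltonicity threshold on $|C| = n - O(\log n)$ vertices. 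A short sprinkling argument lets us separate the randomness used to identify $L$ from that used to guarantee Hamiltonicity of $G[C]$.

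Next I apply the KMP theorem to the Hamiltonian graph $G[C]$ to obtain a schedule of matchings $N_1, \dots, N_T$ in $G[C]$ with $T = O(\log|C|/p) = O(n)$ that makes all agents initially in $C$ mutually acquainted. I extend this schedule to $G$ by augmenting each $N_t$ with pendant edges $\{v u(v) : v \in L,\ u(v) \notin V(N_t)\}$ (still valid matchings by (ii)), and appending $O(|L|^2) = O(\log^2 n) = o(n)$ dedicated bridge rounds for pendant--pendant acquaintances. The analysis splits into three claims: (a) core--core acquaintances follow from KMP applied to the essentially unchanged core matchings, since at most $|L|$ edges are added per round; (b) each pendant agent $v$ becomes acquainted with every core agent, because over $T$ rounds the KMP schedule cycles all $|C|$ agents through the vertex $u(v)$, and the appended swaps along $v u(v)$ deliver each such agent to $v$ at least once; (c) pendant--pendant acquaintances are accomplished in the bridge rounds, by routing each of the $\binom{|L|}{2}$ pairs $v, v' \in L$ along a short $u(v)$--$u(v')$ path in the Hamiltonian core, which has length $O(\log n / \log \log n)$ a.a.s.

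The main obstacle is claim (b), which requires opening the black box of the KMP proof: while KMP guarantees that all pairs of core agents become acquainted, it does not a priori state that every core agent visits every specific vertex $u(v)$. I plan to resolve this by examining the KMP strategy directly (it is built around Hamilton-cycle rotations, which naturally circulate every agent through every vertex within $O(n)$ rounds), or alternatively by appending $O(|L|)$ targeted rotation phases of length $O(n/|L|)$ apiece that force each $u(v)$ to be visited by every core agent. Either modification adds only a $1 + o(1)$ multiplicative factor to the schedule, so the total length remains $O(n)$, yielding the desired bound.
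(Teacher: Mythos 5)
Your high-level architecture (a dense ``core'' on which agents circulate, plus a few pendant vertices handled by extra swaps) is in the same spirit as the paper, but the two load-bearing steps of your plan each have a genuine problem. First, claim (b) is exactly the crux, and your fallback for it fails quantitatively: for \emph{every} core agent to visit a single fixed vertex $u(v)$ requires $\Omega(|C|)=\Omega(n)$ rounds, since at most one new agent can occupy $u(v)$ per round. So ``$O(|L|)$ targeted rotation phases of length $O(n/|L|)$ apiece'' cannot force each $u(v)$ to be visited by all core agents; done honestly this costs $\Omega(n|L|)=\Omega(n\log n)$ rounds and destroys the $O(n)$ bound. The only way out is a \emph{single} schedule in which all agents sweep through all the relevant attachment vertices simultaneously, and the KMP theorem, as a black-box statement about acquaintance, simply does not supply that property. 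This is precisely why the paper does not invoke KMP at all: it constructs a ``good'' spanning tree (a spine path found by a depth-first-search argument in a very sparse random subgraph, heavy vertices matched to the spine, and light pendants) and proves from scratch, in Lemma~\ref{lem:structure}, an $O(n)$ rotation strategy in which every active agent provably traverses the entire spine and all heavy vertices --- which is the exact statement your claim (b) needs and cannot extract from the black box.

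Second, property (iii) and the ``short sprinkling argument'' are not sound as described. The set $L$ of low-degree vertices of $G$ depends on \emph{all} edges of $G$, so you cannot identify $L$ from one exposure round and then use fresh randomness to make $G[C]$ Hamiltonian; and $G[C]$ is an induced subgraph conditioned on degree information, not ``essentially distributed as a random graph,'' so neither the hitting-time theorem for Hamiltonicity nor the KMP theorem (both statements about $G(n,p)$ or $\G(n,m)$ themselves) applies to it. Moreover, at the connectivity hitting time one has $pn-\log n=O(1)$, which is below the regime $pn-\log n-\log\log n\to\infty$ in which KMP is stated, so even for the ambient graph the theorem is not available. The paper sidesteps all of this by exposing edges in a carefully chosen order (first finding a linear-length path in a subgraph of density $\log\log\log n/n$, then exposing edges from unexplored vertices toward the path), so that the leftover set is a uniformly random small set to which Lemma~\ref{lem:technical} applies, and by comparing the process at times $M_-$ and $M_+$ to handle the hitting-time conditioning --- an issue your proposal does not address. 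Your claim (c) and the $O(\log^3 n)$ bridge-round accounting are fine, but without a repair of (b) and (iii) the argument does not go through.
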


\bigskip

Fix $r \in \N \setminus \{1\}$. For an $r$-uniform hypergraph $H=(V,E)$, there are a number of ways one can generalize the problem. At each step of the process, we continue choosing some matching $M$ of agents and swapping the agents occupying its endpoints; two agents can be matched if there exists $e \in E$ such that both agents occupy vertices of $e$. However, this time we have more flexibility in the definition of being acquainted. We can fix $k \in \N$ such that $2 \le k \le r$, and every $k$-tuple of agents sharing an edge is declared to be \emph{$k$-acquainted}, and remains so throughout the process. The \emph{$k$-acquaintance time} of $H$, denoted by $\AC^k_r(H)$, is the minimum number of rounds required for all $k$-tuples of agents to become acquainted. Clearly, for any graph $G$, $\AC^2_2(G) = \AC(G)$.

\smallskip

We show the following result for random $r$-uniform hypergraphs. Note that $p=\frac {(r-1)!\log n}{n^{r-1}}$ is the sharp threshold for connectivity so the assumption for the function $p$ in the statement of the result is rather mild but could be weakened. 

\begin{theorem}\label{thm:main3}
Let $r \in \N \setminus \{1\}$, let $k \in \N$ be such that $2 \le k \le r$, and let $\eps > 0$ be any real number. Suppose that $p=p(n)$ is such that $p \ge (1+\eps) \frac {(r-1)!\log n}{n^{r-1}}$. For $H \in H_r(n,p)$, a.a.s.
$$
\AC^k_r(H) = \Omega \left( \max \left\{ \frac {1}{p n^{r-k}},1 \right\} \right)
\quad \text{ and } \quad
\AC^k_r(H) = O \left( \max \left\{  \frac {\log n}{p n^{r-k}},1 \right\} \right).
$$
\end{theorem}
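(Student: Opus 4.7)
The lower bound is a standard double-counting. A.a.s.\ $|E(H)| = (1+o(1))p\binom{n}{r}$, and in each round each hyperedge freshly $k$-acquaints at most $\binom{r}{k}$ tuples (its $\binom{r}{k}$ current $k$-subsets of agents), so the number of acquainted tuples grows by at most $|E(H)|\binom{r}{k}$ per round. Starting with at most this many and needing to reach $\binom{n}{k}$ gives $\AC^k_r(H)\ge \binom{n}{k}/(|E(H)|\binom{r}{k})-1=\Omega(1/(pn^{r-k}))$. Since for our parameters a.a.s.\ some $k$-subset of $V$ sits in no hyperedge, we also have $\AC^k_r(H)\ge 1$, and the lower bound follows.

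For the upper bound I would handle $k=2$ first by reducing to the graph acquaintance game on the $2$-section $G^*=G^*(H)$, the graph on $V$ whose edges are pairs contained in some hyperedge. Any matching in $G^*$ is a legal move in the hypergraph game, and two agents sharing a $G^*$-edge at any time lie by definition in a common hyperedge, hence are $2$-acquainted in $H$. Thus $\AC^2_r(H)\le \AC(G^*)$. The assumption $p\ge(1+\eps)(r-1)!\log n/n^{r-1}$ makes the expected degree in $G^*$ equal to $(1+\eps)(r-1)\log n\cdot(1+o(1))$, well above the graph connectivity threshold; the goal is then to show that $G^*$ a.a.s.\ inherits the structural features of $G(n,p')$ with $p'\sim pn^{r-2}/(r-2)!$ (degree concentration, edge expansion, existence of a Hamilton cycle) that the proofs of Theorems~\ref{thm:main2} and~\ref{thm:main1} exploit, so that the same strategy yields $\AC(G^*)=O(\log n/(pn^{r-2}))$.

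For $2\le k\le r$ in general, I would run essentially the same matching-based strategy on $G^*$ but for longer, and argue that the induced schedule of agent positions covers all $k$-tuples via a coupon-collector/union-bound argument. The key heuristic is that under a sufficiently \emph{random-like} permutation of agents, a fixed $k$-tuple lies inside a hyperedge of $H$ with probability $\Theta(|E(H)|\binom{r}{k}/\binom{n}{k})=\Theta(pn^{r-k})$, so $O(\log n/(pn^{r-k}))$ quasi-independent rounds should suffice to $k$-acquaint all $\binom{n}{k}$ tuples with high probability, matching the claimed upper bound.

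The main obstacle, in my view, is the non-independence of $G^*$'s edges. Theorems~\ref{thm:main2} and~\ref{thm:main1} are stated for $G(n,p)$ or $\G(n,m)$, not for the $2$-section of a random hypergraph, so their proofs have to be reopened and verified for $G^*$; one must show that the pseudo-random properties a.a.s.\ available in $G(n,p')$ at and above the connectivity threshold are also a.a.s.\ available in $G^*(H_r(n,p))$, perhaps by passing to a suitable independent-edge random subgraph obtained by selecting a random pair within each hyperedge. A secondary challenge is the $k\ge 3$ case, where one needs the matching strategy on $G^*$ to produce enough ``randomness'' in the induced permutations to justify a Chernoff/union-bound argument over all $\binom{n}{k}$ tuples within only a logarithmic blow-up of rounds.
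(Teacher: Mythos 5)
Your lower bound matches the paper's: the same double-counting generalization of inequality~(\ref{eq:trivial_lower}) to $\AC^k_r(H) \ge \binom{n}{k}/(|E|\binom{r}{k})-1$ plus Chernoff concentration of $|E|$. (One small caveat: your claim that ``for our parameters a.a.s.\ some $k$-subset sits in no hyperedge'' is false once $p \gg \log n/n^{r-k}$ --- the paper itself notes that then a.a.s.\ \emph{every} $k$-tuple is acquainted at time $0$ --- but this only affects the cosmetic $\max\{\cdot,1\}$ term, which the paper also glosses over.)

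The upper bound, however, is where the real content lies, and your proposal leaves both of its hard steps as acknowledged open problems rather than solving them; the paper's route avoids both. First, you reduce to the $2$-section $G^*$ and propose to re-verify that $G^*$ inherits the pseudo-random properties used in Theorems~\ref{thm:main2} and~\ref{thm:main1}; the paper never does this. Instead it works directly in the hypergraph: Lemma~\ref{lem:alg} is a DFS argument proved for $r$-uniform hypergraphs giving a loose path on $\delta n$ vertices already at $p = c/n^{r-1}$, and Lemma~\ref{lem:hamilton} (cited from prior work) gives a loose Hamiltonian path above the threshold. This sidesteps the dependence issues in $G^*$ entirely. Second, and more importantly, your coupon-collector heuristic for $k \ge 3$ presupposes that the matching strategy induces ``sufficiently random'' agent permutations so that each of the $\binom{n}{k}$ tuples lands in a hyperedge with probability $\Theta(pn^{r-k})$ per round, quasi-independently; no mechanism is offered to produce this randomness, and at the connectivity threshold there is not enough randomness in $H$ to union-bound over $\binom{n}{k}$ tuples anyway. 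The paper's key missing idea is \emph{deterministic}: by Baranyai's theorem (Lemma~\ref{lem:factors}), the $(k-1)$-subsets of agents can be partitioned into $\binom{N-1}{k-2}$ perfect matchings (``teams''), and for each such $1$-factor the teams are marched as super-vertices along a loose path so that every team shares a hyperedge with every agent; this yields $\AC^k_r(P_n)=O(n^{k-1})$ for a loose path (Lemma~\ref{lem:ac_path}) with no probabilistic input. Randomness enters only in Theorem~\ref{thm:main3}b, and there only to acquaint $k$-tuples split across different sub-paths, by exposing the fresh edges of a second-round hypergraph $H_2$ along the (deterministically long) trajectories. Without the Baranyai scheduling device, or some substitute for it, your outline does not close.
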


\bigskip

Throughout the paper, we will be using the following concentration inequalities. Let $X \in \textrm{Bin}(n,p)$ be a random variable with the binomial distribution with parameters $n$ and $p$. Then, a consequence of Chernoff's bound (see e.g.~\cite[Corollary~2.3]{JLR}) is that 
\begin{equation}\label{chern}
\Prob( |X-\E X| \ge \eps \E X) \le 2\exp \left( - \frac {\eps^2 \E X}{3} \right)  
\end{equation}
for  $0 < \eps < 3/2$. This inequality will be used many times but at some point we will also apply  the bound of Bernstein (see e.g.~\cite[Theorem~2.1]{JLR}) that for every $x > 0$, 
\begin{equation}\label{Bernstein}
\Prob \left( X \ge (1+x) \E X \right) \le \exp \left( - \frac {x^2  \E X} {2(1+x/3)} \right).
\end{equation}

\bigskip

The paper is structured as follows. In Section~\ref{sec:paths}, we investigate the existence of long paths in $H_r(n,p)$. Our result works for any $r \ge 2$ and will be used in both graphs and hypergraphs. In Section~\ref{sec:graphs}, we deal with random graphs ($r=2$) and random hypergraphs ($r \ge 3$) are dealt in Section~\ref{sec:hypergraphs}. We finish the paper with a few remarks.

\section{Existence of long paths in $H_r(n,p)$}\label{sec:paths}

For positive integers $k,r$ and $\ell$ satisfying $\ell = k(r-1) + 1$, we say that an $r$-uniform hypergraph $L$ is a \emph{loose path} of length $\ell$ if there exist an ordering of the vertex set, $(v_1, v_2, \ldots, v_{\ell})$, and an ordering of the edge set, $(e_1, e_2, \ldots, e_{k})$, such that each edge consists of $r$ consecutive vertices, that is, for every $i$, $e_i = \{ v_{(i-1)(r-1)+q} : 1 \le q \le r \}$, and each pair of consecutive edges intersects on a single vertex, that is, for every $i$, 
$e_i \cap e_{i+1} = \{ v_{i(r-1)+1} \}$. Note that the ordering $(v_1, v_2, \ldots, v_{\ell})$ of vertices can be used to completely describe $L$. 

\begin{lemma}\label{lem:alg}
For fixed $r \in \N \setminus \{1\}$ and real number $0<\delta<1$, there is a positive constant $c=c(r,\delta)$ such that for
$H\in H_r(n,p)$ with $p=c/n^{r-1}$
\[
\Prob\left(H \text{ has a loose path of length at least $\delta n$}\right) \ge 1-\exp(-n).
\]
\end{lemma}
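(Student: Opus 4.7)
The plan is to adapt the DFS argument of Krivelevich and Sudakov (originally for long paths in $G(n, c/n)$) to loose paths in random $r$-uniform hypergraphs. We run an exploration that maintains a tripartition $V = S \cup T \cup U$ of the vertex set, with $S = T = \emptyset$ and $U = V$ initially. The set $T$ is a stack whose vertices in push-order always form a loose path. At each step, if $T = \emptyset$ and $U \neq \emptyset$ we move an arbitrary vertex from $U$ to $T$; otherwise, let $v$ be the top connector of $T$ (the most recently pushed vertex, or the lone starting vertex) and, in a fixed order, query the potential edges $\{v\} \cup e'$ for $(r-1)$-subsets $e' \subseteq U$. If some query succeeds we push the chunk $e'$ onto $T$; if every such query fails we pop the top chunk (the last $r-1$ pushed vertices, or the lone starting vertex) and move it to $S$. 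Set up carefully, no potential edge is ever queried twice, so the query outcomes are independent Bernoulli$(p)$ trials.

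Put $q = \lceil \delta n \rceil$ and assume for contradiction that $|T_t| < q$ at every time $t$. Since $|S_t|$ is non-decreasing and $|U_t|$ non-increasing, there is a first moment $t^*$ with $|S_{t^*}| \ge |U_{t^*}|$. A short accounting using $|S_{t^*}| + |T_{t^*}| + |U_{t^*}| = n$, $|T_{t^*}| < q$, and the fact that each step changes $|U|$ by at most $r-1$ yields, for $n$ large,
\[
|S_{t^*}|,\ |U_{t^*}| \;\ge\; \frac{(1-\delta)n}{2} - r \;\ge\; \frac{(1-\delta)n}{3}.
\]
Because each pop event moves at most $r-1$ vertices into $S$, the number of pop events up to time $t^*$ is at least $|S_{t^*}|/(r-1)$, so if $W \subseteq S_{t^*}$ denotes the set of top connectors at those pops then $|W| \ge (1-\delta)n/(3(r-1))$.

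For every $v \in W$ and every $e' \in \binom{U_{t^*}}{r-1}$, the set $\{v\} \cup e'$ is a non-edge of $H$: at $v$'s pop time $\tau_v$ every such query was made and returned non-edge, and $U_{\tau_v} \supseteq U_{t^*}$. These $r$-sets are pairwise distinct (each one determines $v$ as its unique element of $S_{t^*}$), so we obtain $N := |W| \binom{|U_{t^*}|}{r-1} \ge \alpha(r,\delta)\, n^r$ distinct non-edges for some explicit positive constant $\alpha(r,\delta)$. Taking a union bound over the at most $3^n$ possible pairs $(W, U_{t^*})$ (each vertex lies in $W$, in $U_{t^*}$, or in neither),
\[
\Prob\bigl(\max_t |T_t| < q\bigr) \;\le\; 3^n (1-p)^{N} \;\le\; \exp\bigl((\ln 3 - c\alpha)\, n\bigr),
\]
which is at most $\exp(-n)$ as soon as we pick $c = c(r,\delta)$ large enough that $c\alpha \ge \ln 3 + 1$.

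The two delicate points are (a) organizing the DFS in ``chunks'' of $r-1$ vertices so that each potential edge is queried at most once (this is what makes the Bernoulli trials independent), and (b) identifying $\Omega(n)$ distinct fail-connectors in $W$ rather than only $O(1)$, which is what pushes the non-edge count up to the $\Omega(n^r)$ needed to beat the $3^n$ union bound. Once both are set up the probability estimate is routine.
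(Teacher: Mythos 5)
Your proof is correct and follows essentially the same route as the paper's: a depth-first search that maintains a loose path, the identification of a moment when both the set of failed connectors and the unexplored set have linear size with no edge of the form (one failed connector) $\cup$ ($(r-1)$-subset of unexplored vertices), and a union bound over the $\exp(O(n))$ choices of these two sets against a failure probability of $(1-p)^{\Omega(n^r)}$. The only differences are cosmetic bookkeeping (the paper stops the process when the failed-connector set reaches a prescribed size rather than at the first time $|S|\ge|U|$, and uses a $4^n$ rather than $3^n$ union bound).
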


\noindent
(For expressions such as $\delta n$ that clearly have to be an integer, we round up or down but do not specify which: the reader can choose either one, without affecting the argument.)

\begin{proof}
We will apply a depth first search algorithm in order to explore vertices of $H=(V,E) \in H_r(n,p)$ extending some ideas from~\cite{KLS}.
Set
\[
c= c(r, \delta) = \frac{2 (r-1)! \log 4}{\frac{1-\delta}{2(r-1)} \left(\frac{1-\delta}{2}\right)^{r-1}}.
\] 
(Let us note that we make no effort to optimize $c$ here.) In every step of the process, $V$ is partitioned into the following four sets:
\begin{itemize}
\item $P$ -- ordered set of vertices representing a loose path $(v_1, v_2, \ldots, v_{\ell})$ of length~$\ell$,
\item $U$ -- set of unexplored vertices that have not been reached by the search yet,
\item $W$ and $\tilde{W}$ -- sets of explored vertices that no longer play a role in the process.
\end{itemize}
We start the process by assigning to $P$ an arbitrary vertex $v$ from $V$ and setting $U=V \setminus \{v\}$, $W=\tilde{W}=\emptyset$. (Note that it is vacuously true that $L = (\{v\}, \emptyset)$, a hypergraph consisting of one vertex and no edge, is a loose path.) Suppose that at some point of the process we found a path $P = (v_1, v_2, \ldots, v_{\ell})$ of length $\ell$. If there exists a set $f=\{u_1, u_2, \dots,u_{r-1}\}\subseteq U$ of size $r-1$ such that $f \cup \{ v_{\ell} \} \in E$, then we extend the path by adding the edge $\{ v_{\ell}, u_1, u_2, \ldots, u_{r-1} \}$ to the path; vertices of $f$ are removed from $U$. (After this operation, $P=(v_1, v_2, \ldots, v_{\ell}, u_1, u_2, \ldots, u_{r-1})$ has length $\ell + (r-1)$.) Otherwise, that is, if no such set $f$ can be found in $U$, then $v_{\ell}$ is moved from $P$ to $W$ and, depending on the value of $\ell$, we have two cases. If $\ell \neq 1$ (which implies that, in fact, $\ell \ge r$), then vertices $v_{\ell-1},v_{\ell-2}, \dots, v_{\ell-(r-2)}$ are moved from $P$ to $\tilde{W}$. (After this operation, $P=(v_1, v_2, \ldots, v_{\ell-(r-1)})$ has length $\ell - (r-1)$.) If $\ell = 1$, then we simply take an arbitrary vertex from $U$ and restart the process by taking $P=(v)$ of length 1.

Now, let us assume that $H$ has no loose path of length at least $\delta n$. Therefore, in every step of the algorithm $\ell < \delta n$. Moreover, the process ends with $U=\emptyset$ and $|W|+|\tilde{W}| = |V|-|P| > (1-\delta)n$.
Hence, since $(r-2)|W|\ge |\tilde{W}|$, we have $|W| > \frac{1-\delta}{r-1} n$.
Observe that in each step of the process either the size of $W$ increases by one or the size of $U$ decreases by at most $r-1$ (actually, the size of $U$ decreases either by 1 or $r-1$). Thus, at some stage we must have $|W| = \frac{1-\delta}{2(r-1)} n$ and 
\[
|U| = |V|-|P|-|W| - |\tilde{W}|  > 
n - \delta n - |W| - (r-2)|W| 
= \frac{1-\delta}{2} n.
\]
Furthermore, note that there is no edge $e$ between $U$ and $W$ such that $|U\cap e|=r-1$ and $|W\cap e|=1$.

For fixed sets $U$ and $W$ satisfying  $|U| = \frac{1-\delta}{2} n$ and $|W| = \frac{1-\delta}{2(r-1)} n$, let $e(U,W)$ be the number of edges $e$ between $U$ and $W$ in $H$ for which $|U\cap e|=r-1$ and $|W\cap e|=1$.
Then,
\begin{align*}
\Prob(e(U,W) = 0)  &= (1-p)^{|W| \binom{|U|}{r-1}}\\
&\le \exp\left(-(1+o(1))\frac{c}{n^{r-1}} \cdot \frac{1-\delta}{2(r-1)} n \cdot \left(\frac{1-\delta}{2} n \right)^{r-1} /\, (r-1)! \right)\\
&= \exp( -(1+o(1)) 2n\log 4 ),
\end{align*}
and consequently by taking the union over all disjoint subsets $U,W\subseteq V$ satisfying $|U| = \frac{1-\delta}{2} n$ and $|W| = \frac{1-\delta}{2(r-1)} n $, we obtain
\begin{multline*}
\Prob\left(H \text{ has no loose path of length at least $\delta n$}\right)
\le \Prob\left( \bigcup_{U,W} e(U,W) = 0 \right) \\
\le 2^n 2^n \exp( -(1+o(1)) 2n\log 4 )
=\exp( -(1+o(1)) n\log 4 ) \le \exp(-n),
\end{multline*}
as required.
\end{proof}

In order to prove Theorem~\ref{thm:main3} we will also need the following result about Hamiltonian paths in $H_r(n,p)$, which follows from a series of papers~\cite{F,DF,DFLS} devoted to loose Hamiltonicity of random hypergraphs. Observe that if an $r$-uniform hypergraph of order~$n$ contains a loose Hamiltonian path (that is, a loose path that covers each vertex of $H$), then necessarily $r-1$ must divide $n-1$.

\begin{lemma}[\cite{F,DF,DFLS}]\label{lem:hamilton}
For fixed integer $r\ge 3$, let $p=p(n)$ be such that $p n^{r-1} / \log n$ tends to infinity together with~$n$ and $H\in H_r(n,p)$. Then, a.a.s.\ $H$ has a loose Hamiltonian path provided that $r-1$ divides $n-1$.
\end{lemma}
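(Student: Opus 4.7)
The plan is to follow the Pósa-style rotation-extension framework adapted to loose paths, which is the unifying thread of the cited sequence \cite{F,DF,DFLS}. Since we are free to assume $p = (1+o(1)) C \log n / n^{r-1}$ for an arbitrarily large constant $C$ (monotonicity plus a sprinkling step handles the case $p n^{r-1}/\log n \to \infty$), the first move is to split the random hypergraph into two independent copies $H_1 \cup H_2$ with $H_1 \in H_r(n,p_1)$ and $H_2 \in H_r(n,p_2)$, where $p_1 = c/n^{r-1}$ for the constant $c$ from Lemma~\ref{lem:alg} and $p_2 = (1+o(1)) C \log n / n^{r-1}$. By Lemma~\ref{lem:alg} applied to $H_1$ with $\delta$ close to $1$, a.a.s.\ $H_1$ already contains a loose path $P_0$ covering all but a small linear fraction of the vertices, and this will serve as the seed for the extension phase using the fresh edges of $H_2$.

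The main step is to show that if $P = (v_1,\ldots,v_\ell)$ is a longest loose path in $H_1 \cup H_2'$ for a suitable intermediate subhypergraph $H_2' \subseteq H_2$, and $\ell < n$, then with very high probability one can either extend $P$ or close it up into a cycle (and cut it again to extract a longer path), contradicting maximality. The rotation operation is the natural loose-path analogue: given the current endpoint $v_\ell$, one looks for an edge $e \in E(H)$ containing $v_\ell$ together with $r-1$ interior vertices of $P$ positioned so that deleting one existing edge of $P$ and inserting $e$ produces a new loose path on the same vertex set but with a different endpoint $v'$. Iterating rotations from both ends yields a large set $S$ of possible endpoints. The standard Pósa argument, carried out carefully for the loose structure in \cite{DF,DFLS}, shows that a.a.s.\ $|S| = \Omega(n/\log n)$ for every choice of longest path.

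To conclude, one reveals the remaining edges of $H_2$ (the sprinkling step): each potential booster edge --- one that uses a rotation endpoint in $S$ together with $r-1$ vertices outside $P$, or that closes a path into a cycle enabling a new extension --- is present independently with probability $\Omega(\log n / n^{r-1})$, and there are at least $|S| \cdot \Omega(n^{r-1})$ such candidate edges per endpoint count. A union bound over the (boundedly many, by the halting argument) sequential extensions shows that a.a.s.\ every maximal path that is not Hamilton admits a booster, so the process terminates only at a loose Hamilton path, which exists because the divisibility condition $r-1 \mid n-1$ is satisfied.

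The principal obstacle, and the reason three papers were needed rather than one, is that rotations for loose paths are structurally heavier than in the graph case: an edge uses $r-1$ vertices of the path at once, so a single rotation reshuffles a block of $r-1$ consecutive vertices and requires careful bookkeeping to show that the rotated object is still a valid loose path and that the family of reachable endpoints expands in a genuinely pseudorandom way. Quantifying the expansion of $S$ under these more rigid rotations, and matching that expansion to the booster count so that the union bound closes, is the technical heart of the argument; the sharp threshold (allowing the $C \log n$ factor rather than merely a polynomial) was achieved in \cite{DFLS} by refining both the rotation graph and the concentration estimates used in the booster count.
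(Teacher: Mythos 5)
The paper offers no proof of Lemma~\ref{lem:hamilton}; it is imported verbatim from \cite{F,DF,DFLS}, so your sketch has to be measured against what those papers actually do. They do \emph{not} use P\'osa rotation--extension. Frieze \cite{F} (for $r=3$) and Dudek--Frieze \cite{DF}, Dudek--Frieze--Loh--Speiss \cite{DFLS} (for general $r$) reduce loose Hamiltonicity to the existence of a perfect matching in an auxiliary random bipartite hypergraph built from a random partition of the vertex set, and then invoke the Johansson--Kahn--Vu theorem on perfect matchings in random hypergraphs. Your claim that the rotation framework is ``the unifying thread'' of this sequence, and that \cite{DFLS} refined a ``rotation graph,'' is a misattribution: the contribution of \cite{DFLS} is the optimal divisibility condition, still via the matching reduction.

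More importantly, the rotation step at the heart of your argument fails for $r\ge 3$ at this density. A usable rotation edge must contain the endpoint $v_\ell$ together with $r-1$ interior vertices of $P$ positioned so that deleting one path edge and inserting the new one again yields a loose path; this forces those $r-1$ vertices to form (essentially) an existing path edge with one of its link vertices deleted, and there are only $O(n)$ such $(r-1)$-sets along the entire path. Hence the expected number of rotation edges available at a fixed endpoint is $O(pn)=O\left(\log n/n^{r-2}\right)=o(1)$ for $r\ge 3$, so a.a.s.\ no rotation can be performed at all, the endpoint set $S$ never grows, and the asserted bound $|S|=\Omega(n/\log n)$ is unobtainable. (Extension edges, with $\Theta(n^{r-1})$ candidates each present with probability $p$, are plentiful --- it is exactly this asymmetry between extensions and rotations that makes the graph argument collapse for loose paths and forces the detour through perfect matchings.) The opening move of your sketch --- two-round exposure plus Lemma~\ref{lem:alg} to seed a long loose path --- is sound, but everything after it would have to be replaced by the matching reduction or an absorption-type argument.
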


\section{Random graphs}\label{sec:graphs}\label{sec:graphs}

Let us start with the following two comments. First of all, note the following: whenever $G_2$ is a subgraph of $G_1$ on the same vertex set,  $\AC(G_1) \le \AC(G_2)$, since the agents in $G_1$ have more edges to use. Hence, Theorem~\ref{thm:main1} implies that for any $m \ge M$ we also have that a.a.s.\ $\AC(G) = O \left( n \right)$, $G \in \G(n,m)$.

Second of all, it is known that the two models ($G(n,p)$ and $\G(n,m)$) are in many cases asymptotically equivalent, provided ${n \choose 2}p$ is close to $m$. For example, Proposition~1.12 in~\cite{JLR} gives us the way to translate results from $\G(n,m)$ to $G(n,p)$. 

\begin{lemma}\label{lem:gnm_to_gnp}
Let $P$ be an arbitrary property, $p=p(n)$, and $c \in [0,1]$. If for every sequence $m=m(n)$ such that 
$$
m={n \choose 2}p + O \left( \sqrt{ {n \choose 2} p (1-p)} \right)
$$ 
it holds that $\Prob(\G(n,m) \in P) \to c$ as $n\to \infty$, then also $\Prob(G(n,p) \in P) \to c$ as $n \to \infty$.
\end{lemma}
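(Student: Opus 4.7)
The plan is to prove the statement by conditioning $G(n,p)$ on its number of edges, using the standard fact that $G(n,p)$ conditioned on having exactly $m$ edges is distributed as $\G(n,m)$. Writing $N = \binom{n}{2}$ and $X = |E(G(n,p))| \sim \mathrm{Bin}(N,p)$, this gives
$$
\Prob(G(n,p) \in P) \;=\; \sum_{m=0}^{N} \Prob(X=m)\, \Prob(\G(n,m) \in P).
$$
The goal is then to show that the sum is dominated by those $m$ lying in a window $I_A := [Np - A\sqrt{Np(1-p)},\, Np + A\sqrt{Np(1-p)}]$, where the hypothesis applies.

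First I would show that $X$ concentrates in $I_A$. By Chebyshev's inequality (noting that $\mathrm{Var}(X)=Np(1-p)$), for every fixed constant $A>0$,
$$
\Prob(X \notin I_A) \;\le\; \frac{1}{A^2}.
$$
Since $\Prob(\G(n,m)\in P)\in[0,1]$, the total contribution of terms with $m\notin I_A$ to the sum, as well as to $c\cdot \Prob(X\in I_A)$, is at most $1/A^2$ in absolute value.

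Next I would handle the window $I_A$ by a sandwich argument. For each $n$, choose integers $m_n^{+},m_n^{-}\in I_A$ (depending on $A$) that maximize and minimize $\Prob(\G(n,m)\in P)$ over $m\in I_A$, respectively. Both sequences satisfy $m_n^{\pm} = Np + O(\sqrt{Np(1-p)})$ with the implicit constant $A$, so by hypothesis
$$
\Prob(\G(n,m_n^{+})\in P)\to c \quad \text{and}\quad \Prob(\G(n,m_n^{-})\in P)\to c.
$$
Consequently $\Prob(\G(n,m)\in P) = c + o(1)$ uniformly over $m\in I_A$, and therefore
$$
\sum_{m\in I_A} \Prob(X=m)\, \Prob(\G(n,m)\in P) \;=\; (c+o(1))\, \Prob(X\in I_A) \;=\; c + o(1) + O(1/A^2).
$$
Combining the two ranges yields $\limsup_{n\to\infty} |\Prob(G(n,p)\in P) - c| \le 2/A^2$, and letting $A\to\infty$ finishes the proof.

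The argument has no real obstacle; the only point requiring care is the quantifier order in the hypothesis. The hypothesis is an assertion about \emph{every} sequence $m=m(n)$ with $m=Np+O(\sqrt{Np(1-p)})$, so one must fix the constant $A$ first (which determines the admissible sequences $m_n^{\pm}$), derive a bound of the form $2/A^2$, and only then send $A\to\infty$. This order is the whole content; everything else is concentration and a weighted average.
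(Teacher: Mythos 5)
Your argument is correct: the conditioning identity, the Chebyshev concentration of the edge count, the sandwich over the window via the extremal sequences $m_n^{\pm}$ (which is exactly where the ``for every sequence $m(n)$'' hypothesis is consumed), and the order of limits ($n\to\infty$ first, then $A\to\infty$) are all handled properly. The paper itself gives no proof of this lemma---it is quoted verbatim as Proposition~1.12 of the Janson--\L{}uczak--Ruci\'nski book---and your argument is essentially the standard proof of that cited result, so there is nothing further to compare.
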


Using this lemma, Theorem~\ref{thm:main1} implies immediately Theorem~\ref{thm:main2}. Indeed, suppose that $p=p(n)=(\log n + \omega)/n$, where $\omega=\omega(n)$ tends to infinity together with $n$. One needs to investigate $\G(n,m)$ for $m = \frac {n}{2} (\log n + \omega+o(1))$. It is known that in this range of $m$, a.a.s.\ $\G(n,m)$ is connected (that is, a.a.s.\ $M < m$). Theorem~\ref{thm:main1} together with the first observation imply that a.a.s.\ $\AC(\G(n,m)) = O \left( n \right)$, and Theorem~\ref{thm:main2} follows from Lemma~\ref{lem:gnm_to_gnp}.

\bigskip

In order to prove Theorem~\ref{thm:main1}, we will show that at the time when $\G(n,m)$ becomes connected (that is, at time $T$), a.a.s.\ it contains a certain spanning tree $T$ with $\AC(T)=O(n)$. For that, we need to introduce the following useful family of trees. A tree $T$ is \emph{good} if it consists of a path $P=(v_1,v_2,\ldots,v_k)$ (called the \emph{spine}), some vertices (called \emph{heavy}) that form a set $\{ u_i : i \in I \subseteq [k]\}$ that are connected to the spine by a perfect matching (that is, for every $i \in I$, $u_i$ is adjacent to $v_i$). All other vertices (called \emph{light}) are adjacent either to $v_i$ for some $i \in [k]$ or to $u_i$ for some $i \in I$ (see Figure~\ref{fig:tree} for an example).

\begin{figure}
\input{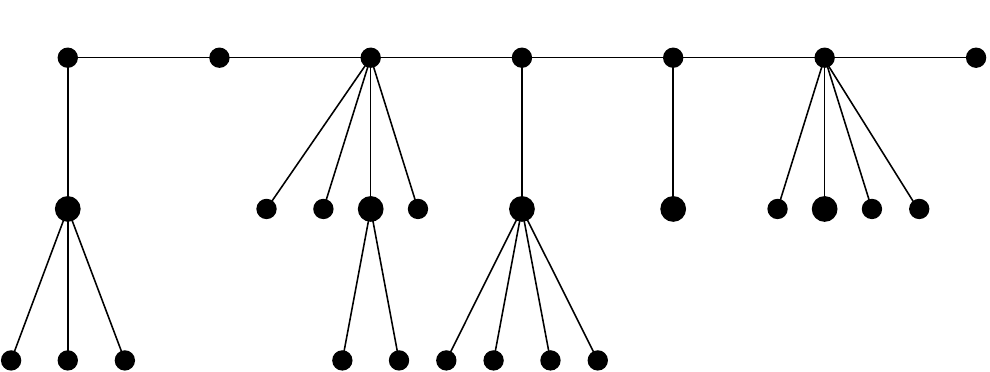_t}
\caption{A good tree.}
\label{fig:tree}
\end{figure}

\smallskip

We will use the following result from~\cite{bst} (Claim 2.1 in that paper).
\begin{claim}[\cite{bst}]\label{claim:bst_route}
Let $G = (V,E)$ be a tree. Let $S, T \subseteq V$ be two subsets of the vertices of equal
size $k = |S| = |T|$, and let $\ell = \max_{v\in S,u\in T} {\dist(v, u)}$ be the maximal distance between a
vertex in $S$ and a vertex in $T$. Then, there is a strategy of $\ell + 2(k - 1)$ matchings that
routes all agents from $S$ to $T$.
\end{claim}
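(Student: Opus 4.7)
The plan is to prove the claim by exhibiting an explicit routing strategy with a pipelining scheme that has a total overhead of roughly $2$ rounds per extra agent beyond the first.

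First, I would fix a bijection $\sigma\colon S \to T$ carefully. Pick a root $r \in V$ and list the vertices of $S$ and $T$ in the order in which they are visited by a DFS from $r$; then pair them accordingly. This choice produces a family of paths $\{P_s : s \in S\}$, where $P_s$ is the unique tree path from $s$ to $\sigma(s)$, that is \emph{non-crossing} in the tree sense: whenever two paths $P_s$ and $P_{s'}$ share edges, they share a contiguous segment on which both agents travel in the same direction (the alternative orientation is ruled out by the DFS pairing). Each $P_s$ has length at most $\ell$.

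Next, I would order the agents $a_1,\dots,a_k$ corresponding to $s_1,\dots,s_k$ and stagger their starts: agent $a_i$ is \emph{introduced} at time $2(i-1)$ and, from then on, advances one edge along $P_{s_i}$ toward $\sigma(s_i)$ per round. In each round $t$, the matching consists precisely of the edges being traversed by the active agents. The non-crossing property, together with the $2$-round separation between consecutive starts, ensures that whenever two agents' routes share an edge, the leading agent is always at least two edges ahead on the shared segment, so the active edges at time $t$ really do form a matching. Agent $a_i$ reaches $\sigma(s_i)$ by time $2(i-1) + |P_{s_i}| \le 2(i-1) + \ell$, so the entire routing is completed in at most $\ell + 2(k-1)$ matching rounds.

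The step I expect to be the main obstacle is the conflict-freeness argument: verifying that the DFS-based bijection together with a $2$-round buffer really does prevent any two agents from needing the same edge or the same endpoint in a single round. If that direct argument becomes too delicate, a cleaner fallback is induction on $k$. The base case $k=1$ is the trivial $\le\ell$ routing. In the inductive step, I would select a pair $(s,\sigma(s))$ whose path $P_s$ sits at the ``boundary'' of the current configuration (for instance, one corresponding to a DFS-leaf of the subtree spanned by $S \cup T$), route the associated agent in at most $\ell$ moves, and show that the remaining $k-1$ agents sit in a tree configuration whose maximum source-to-target distance is still bounded by $\ell$, so the inductive bound $\ell + 2(k-2)$ applies; the overhead of $+2$ per additional agent is then charged to the handful of extra repair moves needed to absorb the displacement caused by routing $a_1$.
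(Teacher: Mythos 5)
The paper does not actually prove this claim; it is quoted verbatim from Benjamini, Shinkar, and Tsur \cite{bst} and used as a black box, so there is no in-paper proof to compare against, and your proposal has to stand on its own. It has a genuine gap at its central step. The assertion that pairing $S$ with $T$ by DFS order forces every shared segment to be traversed in the same direction is false. Take the tree with root $r$ having children $a,b$, where $a$ has children $a_1,a_2$ and $b$ has children $b_1,b_2$; let $S=\{a_1,b_1\}$ and $T=\{r,a_2\}$. Preorder lists $S$ as $(a_1,b_1)$ and $T$ as $(r,a_2)$, so your bijection sends $a_1\to r$ and $b_1\to a_2$, and the two routes then use the edge $\{a,r\}$ in opposite directions. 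The standard repair is to take instead the bijection minimizing the total path length and to exclude oppositely oriented shared edges by an exchange argument; but even then two further problems remain that the $2$-round stagger in DFS order does not resolve: (i) whether two agents conflict on a shared segment is governed by the difference of their \emph{arrival times at that segment}, not of their start times, so a later-released agent with a short approach can arrive at the segment simultaneously with, or adjacent to, an earlier-released agent with a long approach; and (ii) one agent's target may lie in the interior of another agent's route, and the later agent's swap then knocks the already-parked agent off its destination. Both require a concrete ordering/scheduling argument that is missing from the proposal.

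The fallback induction does not close the gap either: routing one agent to completion in up to $\ell$ rounds and then invoking the bound $\ell+2(k-2)$ for the remaining agents yields $2\ell+2(k-2)$, which exceeds $\ell+2(k-1)$ whenever $\ell>2$. The ``$+2$ per additional agent'' cannot be recovered by a handful of local repair moves; it is only achievable if the $k$ routings genuinely overlap in time, which is precisely the pipelining statement you were trying to avoid proving. So you must either complete the direct argument with a correct (e.g.\ minimum-total-length) pairing and an explicit conflict-free schedule, or restructure the induction so that the routing of the remaining $k-1$ agents runs concurrently with, and offset by $2$ rounds from, the first agent's walk.
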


Now, we are ready to show that this family of trees is called good for a reason. 

\begin{lemma}\label{lem:structure}
Let $T$ be a good tree on $n$ vertices. Then, $\AC(G)=O(n)$ and, moreover, every agent visits every vertex of the spine.
\end{lemma}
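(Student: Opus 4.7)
The plan is to exhibit an explicit pipelined strategy on the good tree $T$ that realizes both the $O(n)$ bound and the ``every agent visits every spine vertex'' condition. The strategy combines two primitives. The first is odd-even transposition on the spine: alternate the matching $\{v_1v_2,v_3v_4,\ldots\}$ with $\{v_2v_3,v_4v_5,\ldots\}$. A classical bubble-sort-style analysis shows that after $2k$ such rounds, every agent currently on the spine has visited every spine vertex, and all spine agents are returned to their starting positions (so the map has period $2k$). The second primitive is cluster injection: for each cluster $C_i$ (consisting of $v_i$, possibly $u_i$, and the attached light children), we use swaps along tree edges to move an off-spine agent of $C_i$ onto $v_i$ while evicting the current occupant back into $C_i$. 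Since every non-spine vertex lies within distance~$2$ of $v_i$, each injection costs $O(1)$ rounds---one swap for a light child of $v_i$ or for $u_i$, and two swaps (first with $u_i$, then $u_i$ with $v_i$) for a light child of~$u_i$.

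The strategy interleaves these primitives so that each agent spends a consecutive window of $\Theta(k)$ rounds on the spine, during which it visits every spine vertex by primitive~(i). Since the spine simultaneously holds $k$ agents and each of the $n$ agents requires such a window, the standard pipelining accounting ($n\cdot k$ agent/spine-visits divided by $k$ spine slots per round) yields $\Theta(n)$ rounds total. Concretely, I would dedicate most rounds to the spine odd-even transposition, and periodically insert ``injection rounds'' in which each cluster $C_i$ cycles a fresh agent onto $v_i$ (in parallel across clusters, since distinct clusters are vertex-disjoint on their injection edges). The injection frequency in each cluster $C_i$ is chosen proportional to $|C_i|$, so that over the full $O(n)$ rounds, each of the $|C_i|$ agents of $C_i$ has its own spine window.

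The main obstacle is scheduling the injections without disrupting the spine odd-even dynamics: an injection swap at $v_i$ uses an edge incident to $v_i$ and therefore blocks the two spine edges at $v_i$ in the same round. I plan to resolve this either by dedicating alternating rounds to ``spine rounds'' and ``cluster rounds'' (losing only a constant factor), or by invoking Claim~\ref{claim:bst_route} to batch-route groups of $k$ agents simultaneously in $O(k)$ rounds and iterate $O(n/k)$ times. A secondary difficulty is the depth-$2$ light children of $u_i$, which require a two-step handoff; this adds only a constant factor to the per-agent cost.

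Finally, once the ``every agent visits every spine vertex'' property is established, the full acquaintance conclusion follows by a short case analysis. For any two agents $a,b$, during $a$'s window on the spine, $b$ is either (a) simultaneously on the spine, in which case they pass at adjacent positions during the odd-even sweep, or (b) residing in some cluster $C_j$, in which case I arrange the schedule so that $a$ visits $v_j$ at a moment when $b$ is at $v_j$, at $u_j$, or at a light child---all vertices adjacent to $v_j$ or to $u_j$, each of which $a$ also visits during an appropriate injection round. This case analysis, combined with the window/visit property, gives the $O(n)$ acquaintance bound.
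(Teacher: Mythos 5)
Your two primitives (odd--even transposition on the spine, plus routing fresh agents onto the spine) are the right ones, and your fallback of batch-routing teams of $k$ agents via Claim~\ref{claim:bst_route} and iterating $O(n/k)$ times is exactly the paper's route. But there is a genuine gap in how you handle agents parked at light children of \emph{heavy} vertices. Such an agent $b$ sits at a vertex whose only neighbour is $u_j$; it is at distance $2$ from the spine. An agent $a$ that merely sweeps the spine visits $v_j$ but never becomes adjacent to $b$, so $a$ and $b$ do not get acquainted unless $a$ actually visits $u_j$ (or $b$ is moved). Your case analysis asserts that ``$a$ also visits [$u_j$] during an appropriate injection round,'' but your injection primitive does not deliver this: a $C_j$-injection puts a fresh agent of $C_j$ onto $v_j$ and evicts whoever happens to occupy $v_j$ at that instant. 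There are only $|C_j|$ such injections over the whole process, $a$ occupies $v_j$ only $O(1)$ times during its window, and nothing synchronizes the two; moreover, if $a$ \emph{is} the evicted agent, it leaves the spine mid-window, breaking your ``contiguous window'' accounting. The paper closes exactly this hole by building a detour into every phase of the spine dance: each phase is $3$ rounds --- one spine swap, then every active agent at a $v_i$ with $i\in I$ swaps with the occupant of $u_i$ and swaps back. This costs only a constant factor and guarantees that every active agent visits every heavy vertex as well as every spine vertex; since every vertex of a good tree is adjacent to the spine or to a heavy vertex, a single such traversal acquaints the active agent with \emph{all} $n$ agents, wherever they sit. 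You need this detour (or an equivalent device) in either of your two scheduling variants.

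The second, lesser issue is that your primary pipelined schedule is never actually constructed: you name the conflict between injection edges and spine edges as ``the main obstacle'' and propose two resolutions without carrying either out, and your case (a) (two agents with overlapping but non-identical spine windows must pass each other) is not automatic --- the ``every pair passes'' property of the odd--even dance is clean only when both agents are present for a common full cycle. The batch version avoids all of this: put a team of at most $k$ agents on the spine using Claim~\ref{claim:bst_route} (cost $O(k)$ since the diameter of a good tree is $O(k)$), run $2k$ phases of the dance-with-detours (cost $6k$), and repeat for each of the $\lceil n/k\rceil$ teams, for $O(n)$ rounds in total. Once the detours are added, the team's single traversal already acquaints its members with everyone, so no cross-window argument is needed at all. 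I recommend you drop the pipelined variant and write up the batch variant with the heavy-vertex detours made explicit.
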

\begin{proof}
Let us call agents occupying the spine $P=(v_1,v_2,\ldots,v_k)$ \emph{active}. First, we will show that there exists a strategy ensuring that within $6k$ rounds every active agent gets acquainted with everyone else and, moreover, that she visits every vertex of the spine.

For $1 \le i \le k-1$, let $e_i = v_iv_{i+1}$. The strategy is divided into $2k$ phases, each consisting of 3 rounds. On odd-numbered phases, swap active agents on all odd-indexed edges; then, swap every active agent adjacent to a heavy vertex (that is, agent occupying vertex $v_i$ for some $i \in I$) with non-active agent (occupying $u_i$), and then repeat it again so that all active agents are back onto the spine. On even-numbered phases, swap agents on all even-indexed edges, followed by two rounds of swapping with non-active agents, as it was done before. This has the following effect.  Agents that begin on odd-indexed vertices move ``forward'' in the vertex ordering, pause for one round at $v_k$, move ``backward'', pause again at $v_1$, and repeat; agents that begin on even-indexed vertices move backward, pause at $v_1$, move forward, pause at $v_k$, and repeat.  Since $T$ is good, every non-active agent was initially adjacent to some vertex of the spine or some heavy vertex. Hence, after $2k$ phases, each active agent has traversed the entire spine taking detours to heavy vertices, if needed; in doing so, she has necessarily passed by every other agent (regardless whether active or non-active). 

It remains to show that agents can be rotated so that each agent is active at some point. Note that the diameter of $T$ is $O(k)$.  Partition the $n$ agents into $\ceil{n/k}$ teams, each of size at most $k$.  We iteratively route a team onto the spine, call them active, apply the strategy described earlier for the active team, and repeat until all teams have traversed the spine.  Thus, by Claim~\ref{claim:bst_route}, each iteration can be completed in $O(k)$ rounds, so the total number of rounds needed is $\ceil{n/k} \cdot O(k) = O(n)$.
\end{proof}

As we already mentioned, our goal is to show that at the time when $\G(n,m)$ becomes connected, a.a.s.\ it contains a good spanning tree $T$. However, it is easier to work with $G(n,p)$ model instead of $G(n,m)$. Lemma~\ref{lem:gnm_to_gnp} provides us with a tool to translate results from $\G(n,m)$ to $G(n,p)$. The following lemma works the other way round, see, for example,~(1.6) in~\cite{JLR}.

\begin{lemma}\label{lem:gnp_to_gnm}
Let $P$ be an arbitrary property, let $m=m(n)$ be any function such that $m \le n \log n$, and take $p=p(n) = m/{n \choose 2}$. Then,
$$
\Prob(\G(n,m) \in P) \le 3 \sqrt{n \log n} \cdot \Prob(G(n,p) \in P).
$$
\end{lemma}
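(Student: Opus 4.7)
The plan is to condition $G(n,p)$ on its number of edges, using the standard fact that, conditioned on having exactly $m$ edges, $G(n,p)$ is distributed as $\G(n,m)$. Set $N=\binom{n}{2}$ and let $X\sim \mathrm{Bin}(N,p)$ denote the number of edges of $G(n,p)$. The law of total probability gives
\begin{equation*}
\Prob(G(n,p)\in P)=\sum_{k=0}^{N}\Prob(\G(n,k)\in P)\cdot\Prob(X=k) \ge \Prob(\G(n,m)\in P)\cdot \Prob(X=m),
\end{equation*}
so it suffices to produce a lower bound of the form $\Prob(X=m)\ge 1/(3\sqrt{n\log n})$.

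The key observation is that the choice $p=m/N$ was made precisely so that $\E X=m$, which places $m$ at (or within one of) the mode of $\mathrm{Bin}(N,p)$, where the mass function is maximized. A standard Stirling-type estimate applied to
\begin{equation*}
\Prob(X=m)=\binom{N}{m} p^{m}(1-p)^{N-m}
\end{equation*}
yields $\Prob(X=m)\ge c/\sqrt{Np(1-p)+1}$ for some absolute constant $c>0$ (one simply expands each factorial in $\binom{N}{m}$ via Stirling's formula, and the exponential factors cancel at $p=m/N$). Since $p=m/N$, the hypothesis $m\le n\log n$ implies $Np(1-p)=m(1-m/N)\le m \le n\log n$, and therefore $\Prob(X=m)\ge c/\sqrt{n\log n+1}$. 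Tuning the constant to~$1/3$ (and checking directly the finitely many values of $m$ for which the asymptotic Stirling estimate is not yet sharp) gives the desired bound, and rearranging completes the proof.

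There is essentially no conceptual obstacle here; the only piece of analysis to carry out carefully is the explicit Stirling bound on the binomial mass at its mode, which is a well-known local-limit-type inequality. The hypothesis $m\le n\log n$ enters exclusively to control $\sqrt{Np(1-p)}$ by $\sqrt{n\log n}$, which is the source of the factor appearing in the conclusion.
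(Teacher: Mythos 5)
Your argument is correct: conditioning $G(n,p)$ on its edge count to get $\Prob(G(n,p)\in P)\ge \Prob(\G(n,m)\in P)\cdot\Prob\left(\mathrm{Bin}\left(\tbinom{n}{2},p\right)=m\right)$, and then lower-bounding the binomial mass at its mean $m=\tbinom{n}{2}p$ by $1/(3\sqrt{m})\ge 1/(3\sqrt{n\log n})$ via Stirling, is exactly the standard proof of this fact. The paper itself gives no proof and simply cites it as inequality (1.6) of Janson--\L{}uczak--Ruci\'nski, where it is established by precisely this conditioning-plus-Stirling argument, so your proposal matches the intended route.
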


We will also need the following lemma.

\begin{lemma}\label{lem:technical}
Consider $G = (V,E) \in G(n,\log \log \log n / n)$. Then, for every set $A$ of size $0.99n$ the following holds. For a set $B$ of size $O(n^{0.03})$ taken uniformly at random from $V \setminus A$, $B$ induces a graph with no edge with probability $1-o((n \log n)^{-1/2})$.
\end{lemma}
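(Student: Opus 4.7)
The plan is a simple first-moment bound, exploiting the independence of the random graph $G$ and the random set $B$. I would set $p := \log\log\log n / n$, $n' := |V \setminus A| = 0.01\, n$, and $b := |B| = O(n^{0.03})$. For any fixed pair $\{u,v\} \subseteq V \setminus A$, the joint probability that $\{u,v\}$ is an edge of $G$ \emph{and} is contained in $B$ factors as
$$
\Pr[\{u,v\} \in E(G)] \cdot \Pr[\{u,v\} \subseteq B] \;=\; p \cdot \frac{\binom{b}{2}}{\binom{n'}{2}},
$$
since the randomness of the edge set of $G$ is independent of the randomness of the uniform subset $B \subseteq V \setminus A$.

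Summing this identity over all $\binom{n'}{2}$ pairs in $V \setminus A$ gives the expected number of edges of $G$ lying inside $B$, which telescopes to exactly $p\binom{b}{2}$. By Markov's inequality (equivalently, the union bound), the probability that $B$ contains at least one edge of $G$ is therefore at most
$$
p \binom{b}{2} \;=\; O\!\left( \frac{\log \log \log n}{n} \cdot n^{0.06} \right) \;=\; O\!\left( n^{-0.94} \log\log\log n \right).
$$

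Finally I would verify that $O(n^{-0.94} \log\log\log n)$ is indeed $o((n \log n)^{-1/2}) = o(n^{-1/2}(\log n)^{-1/2})$: the ratio equals $n^{-0.44}(\log n)^{1/2}\log\log\log n$, which tends to $0$ since the polynomial factor $n^{-0.44}$ dominates any polylogarithmic factor. There is no real obstacle in this lemma; the parameters ($p$ only slightly above $1/n$, and $|B|$ only $n^{0.03}$) are calibrated precisely so that the product $p |B|^2$ is \emph{polynomially} smaller than $n^{-1/2}$. The unusually sharp probability bound $1 - o((n\log n)^{-1/2})$ is tailored to feed into Lemma \ref{lem:gnp_to_gnm} when the eventual conclusion about $G(n,p)$ is later transferred to the process $\G(n,m)$.
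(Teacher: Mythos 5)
Your first-moment computation is arithmetically correct: for a \emph{fixed} set $A$ and a uniform $B\subseteq V\setminus A$ chosen independently of $G$, the expected number of edges inside $B$ is exactly $p\binom{|B|}{2}=O(n^{-0.94}\log\log\log n)=o((n\log n)^{-1/2})$, and Markov finishes. This is a cleaner calculation than the paper's and even gives a slightly better bound. However, it hinges entirely on the independence step $\Prob[\{u,v\}\in E(G)\ \text{and}\ \{u,v\}\subseteq B]=p\cdot\Prob[\{u,v\}\subseteq B]$, which is only valid when $A$ (hence $V\setminus A$, hence the distribution of $B$) does not depend on the graph $G$.

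That is precisely the point where your route diverges from the paper's, and it matters for how the lemma is used. In the proof of Theorem~\ref{thm:main1}, $A$ is the vertex set of the long path found inside $G_1$, so $A$ is a function of $G$; conditioned on $\{u,v\}\subseteq V\setminus A(G)$, the edge $\{u,v\}$ is no longer present with probability $p$, and one cannot union-bound over all $\binom{n}{0.99n}$ choices of $A$ with a failure probability that is only polynomially small. The paper sidesteps this by a two-step argument: first it shows that with probability $1-o((n\log n)^{-1/2})$ the graph $G$ has maximum degree at most $2c\log n$ (a property of $G$ alone, requiring no reference to $A$), and then, for \emph{any} deterministic graph with that property and \emph{any} $A$, it bounds the probability of an edge inside $B$ using only the randomness of $B$ (each $v\in B$ has at most $2c\log n$ neighbours in $V\setminus A$, so the chance one lands in $B$ is $O(n^{-0.97}\log n)$, and a union bound over $B$ gives $O(n^{-0.94}\log n)$). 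This yields the statement uniformly in $A$, which is what the application requires. To repair your argument you would need to either restrict the lemma to $A$ independent of $G$ (insufficient downstream) or insert a deterministic degree-type bound on $G$ as the paper does.
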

\begin{proof}
Fix a vertex $v \in V$. The expected degree of $v$ is $(1+o(1)) \log \log \log n$. It follows from Bernstein's bound~(\ref{Bernstein}), applied with $x = c \log n / \log \log \log n$ for $c$ large enough, that with probability $o(n^{-2})$, $v$ has degree larger than, say, $2 c \log n$. Thus, the union bound over all vertices of $G$ implies that with probability $1-o((n \log n)^{-1/2})$ all vertices have degrees at most $2 c \log n$. Since we aim for such a probability, we may assume that $G$ is a deterministic graph that has this property.

Now, fix any set $A$ of size $0.99n$. Regardless of our choice of $A$, clearly, every vertex of $V \setminus A$ has at most $2 c \log n$ neighbours in $V \setminus A$. Take a random set $B$ from $V \setminus A$ of size $O(n^{0.03})$ and fix a vertex $v \in B$. The probability that no neighbour of $v$ is in $B$ is at least
$$
\frac{ { {0.01 n -1 - 2 c \log n} \choose {|B|-1} } } { { {0.01 n-1} \choose {|B|-1} } } = 1 - O \left( \frac {n^{0.03} \log n} {n} \right) = 1 - O \left( n^{-0.97} \log n\right).
$$
Hence, the probability that some neighbour of $v$ is in $B$ is $O \left( n^{-0.97} \log n\right)$. Consequently, the union bound over all vertices of $B$ implies that with probability $O \left( n^{-0.94} \log n\right)$ there is at least one edge in the graph induced by $B$. The proof of the lemma is finished.
\end{proof}

Finally, we are ready to prove the main result of this paper.

\begin{proof}[Proof of Theorem~\ref{thm:main1}]
Let $M_- = \frac n2 (\log n - \log \log n)$ and let $M_+ = \frac n2 (\log n + \log \log n)$; recall that a.a.s.\ $M_- < M < M_+$. First, we will show that a.a.s.\ $\G(n,M_-)$ consists of a good tree $T$ and a small set $S$ of isolated vertices. Next, we will show that between time $M_-$ and $M_+$ a.a.s.\ no edge is added between $S$ and light vertices of $T$. This will finish the proof, since $\G(n,M)$ is connected and so at that point of the process, a.a.s.\ vertices of $S$ must be adjacent to the spine or heavy vertices of $T$, which implies that a.a.s.\ there is a good spanning tree. The result will follow then from Lemma~\ref{lem:structure}.

As promised, let $p_- = M_-/{n\choose 2} = (\log n - \log \log n+o(1))/n$ and consider $G(n,p_-)$ instead of $\G(n,M_-)$. In order to avoid technical problems with events not being independent, we use a classic technique known as \emph{two-round exposure}. The observation is that a random graph $G \in G(n,p)$ can be viewed as a union of two independently generated random graphs $G_1 \in G(n,p_1)$ and $G_2 \in G(n,p_2)$, provided that $p=p_1 + p_2 - p_1 p_2$ (see, for example,~\cite{bol, JLR} for more information). 

Let $p_1 := \log \log \log n / n$ and
$$
p_2 := \frac {p_--p_1}{1-p_1} = \frac{\log n - \log \log n - \log \log \log n + o(1)}{n}.
$$
Fix $G_1 \in G(n,p_1)$ and $G_2 \in G(n,p_2)$, with $V(G_1) = V(G_2)=V$, and view $G \in G(n,p_-)$ as the union of $G_1$ and $G_2$.  It follows from Lemma~\ref{lem:alg} that with probability $1-o((n \log n)^{-1/2})$, $G_1$ has a long path  $P=(v_1, v_2, \ldots, v_k)$ of length $k=0.99n$ (and thus $G$ has it too). This path will eventually become the spine of the spanning tree. 

Now, we expose edges of $G_2$ in a very specific order dictated by the following algorithm. Initially, $A$ consists of vertices of the path (that is, $A = \{v_1, v_2, \ldots, v_k\}$), $B = \emptyset$, and $C = V \setminus A$. At each step of the process, we take a vertex $v$ from $C$ and expose edges from $v$ to $A$. If an edge from $v$ to some $v_i$ is found, we change the label of $v$ to $u_i$, call it heavy, remove $v_i$ from $A$, and move $u_i$ from $C$ to $B$. Otherwise (that is, if there is no edge from $v$ to $A$), we expose edges from $v$ to $B$. If an edge from $v$ to some heavy vertex $u_i$ is found, we call $v$ light, and remove it from $C$. Clearly, at the end of this process we are left with a small (with the desired probability, as we will see soon) set $C$ and a good tree $T_1$. Let $X = |C|$ be the random variable counting vertices not attached to the tree yet. Since at each step of the process $|A|+|B|$ is precisely $0.99n$, we have
\begin{eqnarray*}
\E X &=& 0.01 n \big( 1 - p_2 \big)^{0.99n} \\
&=& (0.01+o(1)) \exp \big ( \log n - 0.99 ( \log n - \log \log n - \log \log \log n) \big)\\
&=& (0.01+o(1)) n^{0.01} \big( (\log n)(\log \log n) \big)^{0.99}.
\end{eqnarray*}
Note that $X$ is, in fact, a binomial random variable $\textrm{Bin}(0.01n,( 1 - p_2 )^{0.99n})$. Hence, it follows from Chernoff's bound~(\ref{chern}) that $X = (1+o(1)) \E X$ with probability $1-o((n \log n)^{-1/2})$. Now, let $Y$ be the random variable counting how many vertices \emph{not} on the path are \emph{not} heavy.  Arguing as before, since at each step of the process $|A| \ge 0.98n$, we have
$$
\E Y \le 0.01 n \big( 1 - p_2 \big)^{0.98n} = (0.01+o(1)) n^{0.02} \big( (\log n)(\log \log n) \big)^{0.98}.
$$
Clearly, $Y \ge X$ and so $\E Y$ is large enough for Chernoff's bound~(\ref{chern}) to be applied again to show that $Y = (1+o(1)) \E Y$ with probability $1-o((n \log n)^{-1/2})$.

Our next goal is to attach almost all vertices of $C$ to $T_1$ in order to form another, slightly larger, good tree $T_2$. Consider a vertex $v \in C$ and a heavy vertex $u_i$ of $T_1$. Obvious, yet important, property is that when edges emanating from $v$ were exposed in the previous phase, exactly one vertex from ${v_i, u_i}$ was considered (recall that when $u_i$ is discovered as a heavy vertex, $v_i$ is removed from $A$). Hence we may expose these edges and try to attach $v$ to the tree. From the previous argument, since we aim for a statement that holds with probability $1-o((n \log n)^{-1/2})$, we may assume that the number of heavy vertices is at least $0.01n - n^{0.03}$. For the random variable $Z$ counting vertices still not attached to the path, we get
\begin{eqnarray*}
\E Z &=& (1+o(1)) \E X \big( 1 - p_2 \big)^{0.01n - O(n^{0.03})} \\
&=& (0.01+o(1)) n^{0.01} \big( (\log n)(\log \log n) \big)^{0.99} \\
&& \quad \times \exp \big ( - 0.01 ( \log n - \log \log n - \log \log \log n) \big)\\
&=& (0.01+o(1)) \big( (\log n)(\log \log n) \big),
\end{eqnarray*}
and so $Z = (1+o(1)) \E Z$ with probability $1-o((n \log n)^{-1/2})$ by Chernoff's bound~(\ref{chern}).

Let us stop for a second and summarize the current situation. We showed that with the desired probability, we have the following structure. There is a good tree $T_2$ consisting of all but at most $(\log n)(\log \log n)$ vertices that form a set $S$. $T_2$ consists of the spine of length $0.99n$,  $0.01n - O(n^{0.03})$ heavy vertices and $O(n^{0.03})$ light vertices. Edges between $S$ and the spine and between $S$ and heavy vertices are already exposed and no edge was found. On the other hand, edges within $S$ and between $S$ and light vertices are not exposed yet. However, it is straightforward to see that with the desired probability there is no edge there neither, and so vertices of $S$ are, in fact, isolated in $G_2$. Indeed, the probability that there is no edge in the graph induced by $S$ and light vertices is equal to
$$
\big( 1 - p_2 \big)^{O(n^{0.06})} = \exp \big( - O(n^{0.06} \log n / n) \big) = 1 - o( (n \log n)^{-1/2} ).
$$

Finally, we need to argue that vertices of $S$ are also isolated in $G_1$. The important observation is that the algorithm we performed that exposed edges in $G_2$ used only the fact that vertices of $\{v_1, v_2, \ldots, v_{0.99n}\}$ form a long path; no other information about $G_1$ was ever used. Hence, set $S$ together with light vertices is, from the perspective of the graph $G_1$, simply a random set of size $O(n^{0.03})$ taken from the set of vertices not on the path. Lemma~\ref{lem:technical} implies that with probability $1-o((n \log n)^{-1/2})$ there is no edge in the graph induced by this set. With the desired property, $G(n,p)$ consists of a good tree $T$ and a small set $S$ of isolated vertices and so, by Lemma~\ref{lem:gnp_to_gnm}, a.a.s.\ it is also true in $\G(n,M_-)$.

It remains to show that between time $M_-$ and $M_+$ a.a.s.\ no edge is added between $S$ and light vertices of $T$. Direct computations for the random graph process show that this event holds with probability
\begin{eqnarray*}
{{ {n \choose 2} - M_- - O(n^{0.06})} \choose {M_+ - M_-} } / {{ {n \choose 2} - M_-} \choose {M_+ - M_-} } &=& \prod_{i=0}^{M_+-M_--1} \frac { {n \choose 2} - M_- - O(n^{0.06}) - i}{{n \choose 2} - M_- -i}\\
&=& \left( 1 - \frac {O(n^{0.06})}{n^2} \right)^{M_+ - M_-} \\
&=& 1 - \frac {O(n^{0.06} \log \log n)}{n}  \to 1
\end{eqnarray*}
as $n \to \infty$, since $M_+ - M_- = O(n \log \log n)$. The proof is finished.
\end{proof}

\section{Random hypergraphs}\label{sec:hypergraphs}

First, let us mention that the trivial bound for graphs~(\ref{eq:trivial_lower}) can easily be generalized to $r$-uniform hypergraphs and any $2 \le k\le r$:
\[
\AC^k_r(H) \ge \frac {{|V| \choose k}}{|E| {r \choose k}} - 1.
\]
For $H = (V,E) \in H_r(n,p)$ with $p > (1+\eps) \frac {(r-1)! \log n}{n^{r-1}}$ for some $\eps > 0$, we get immediately that a.a.s.
$$
\AC^k_r(H) = \Omega \left( \frac {1}{p n^{r-k}} \right),
$$
since the expected number of edges in $H$ is ${n \choose r} p = \Omega(n \log n)$ and so the concentration follows from Chernoff's bound~(\ref{chern}). Hence, the lower bound in Theorem~\ref{thm:main3} holds.

Now we prove the upper bound. In order to do it, we will split Theorem~\ref{thm:main3} into two parts and then, independently, prove each of them.

\begin{theorem_a}
Let $r \in \N \setminus \{1\}$, let $k \in \N$ be such that $2 \le k \le r$, and let $\eps > 0$ be any real number. Suppose that $p=p(n) = (1+\eps) \frac {(r-1)! \log n}{n^{r-1}}$. For $H \in H_r(n,p)$, a.a.s.
$$
\AC^k_r(H) = O(n^{k-1}) = O \left( \frac {\log n}{p n^{r-k}} \right).
$$
\end{theorem_a}

\begin{theorem_b}
Let $r \in \N \setminus \{1\}$, let $k \in \N$ be such that $2 \le k \le r$, and let $\omega = \omega(n)$ be any function tending to infinity together with $n$. Suppose that $p=p(n) = \omega \frac {\log n}{n^{r-1}}$. For $H \in H_r(n,p)$, a.a.s.
$$
\AC^k_r(H) = O \left( \max \left\{ \frac {\log n}{p n^{r-k}}, 1 \right\} \right).
$$
\end{theorem_b}

Note that when, say, $p \ge 2 k (r-k)! \frac {\log n}{n^{r-k}}$, the expected number of $k$-tuples that do \emph{not} get acquainted initially (that is, those that are not contained in any hyperedge) is equal to
$$
{n \choose k} (1-p)^{n-k \choose r-k} \le \exp \left( k \log n - (1+o(1)) 2k (r-k)! \frac {\log n}{n^{r-k}} \cdot \frac {n^{r-k}}{(r-k)!} \right) = o(1),
$$
and so, by Markov's inequality, a.a.s.\ every $k$-tuple gets acquainted immediately and $\AC^k_r(H)=0$. This is also the reason for taking the maximum of the two values in the statement of the result. 

\bigskip

For a given hypergraph $H=(V,E)$, sometimes it will be convenient to think about its \emph{underlying graph} $\tilde{H}=(V,\tilde{E})$ with $\{u,v\} \in \tilde{E}$ if and only if there exists $e \in E$ such that $\{u, v\} \subseteq e$. Clearly, two agents are matched in $\tilde{H}$ if and only if they are matched in~$H$. Therefore, we may always assume that agents walk on the underlying graph of~$H$.

\smallskip

Let $H=(V,E)$ be an $r$-uniform hypergraph. A \emph{1-factor} of $H$ is a set $F\subseteq E$ such that every vertex of $V$ belongs to exactly one edge in $F$. Moreover, a \emph{1-factorization} of $H$ is a partition of $E$ into 1-factors, that is, $E=F_1\cup F_2 \cup \dots\cup F_{\ell}$, where each $F_i$ is a 1-factor and $F_i \cap F_j = \emptyset$ for all $1\le i < j \le \ell$. Denote by $K_n^r$ the \emph{complete} $r$-uniform hypergraph of order~$n$ (that is, each $r$-tuple is present). We will use the following well-known result of Baranyai~\cite{Bar}.

\begin{lemma}[\cite{Bar}]\label{lem:factors}
If $r$ divides $n$, then $K_n^r$ is 1-factorizable.
\end{lemma}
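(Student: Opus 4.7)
The plan is to follow Baranyai's original rounding argument. Write $n = rm$ and set $L = \binom{n-1}{r-1}$; since $L \cdot (n/r) = \binom{n}{r}$, any 1-factorization of $K_n^r$ must have exactly $L$ classes. The key technical ingredient is a classical \emph{matrix rounding lemma}: any real matrix $A$ with integer row sums and integer column sums admits an integer matrix $B$ with $B_{ij} \in \{\lfloor A_{ij} \rfloor, \lceil A_{ij} \rceil\}$ and the same row and column sums. This follows from the total unimodularity of the bipartite incidence matrix, and has a direct constructive proof by flipping along even alternating cycles in the bipartite multigraph of fractional entries of $A$.

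The construction itself would process vertices one at a time. For each $0 \le k \le n$ I would maintain a nonnegative integer matrix $X^{(k)}$ whose $L$ rows are indexed by the nascent 1-factors and whose columns are indexed by subsets $S \subseteq \{1, \ldots, k\}$ with $|S| \le r$; the entry $X^{(k)}_{i,S}$ counts the partial edges of the $i$th 1-factor whose intersection with $\{1, \ldots, k\}$ equals $S$. Two invariants must be preserved: (i) for each row $i$, the columns with positive entries encode a partition of $\{1, \ldots, k\}$ into pairwise disjoint partial edges of size at most $r$; and (ii) the column marginals satisfy $\sum_i X^{(k)}_{i,S} = \binom{n-k}{r-|S|}$, so that once the remaining $n-k$ vertices are distributed among the partial edges, each $r$-subset of $[n]$ is used in exactly one 1-factor. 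At $k = n$, the matrix $X^{(n)}$ is precisely a 1-factorization.

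The inductive transition from $X^{(k)}$ to $X^{(k+1)}$ is the heart of the argument. Each old column $S$ splits into two new columns $S$ and $S \cup \{k+1\}$, subject to the constraint that vertex $k+1$ enters exactly one partial edge per row. The natural fractional split (proportional to the chance that a random unassigned vertex completes the partial edge) preserves the required row and column sums but is typically not integer-valued; applying the rounding lemma then yields an integer matrix with the same marginals. The divisibility hypothesis $r \mid n$ enters exactly here, to ensure that the column marginals $\binom{n-k}{r-|S|}$ and the total row sums remain integers at every stage, so that the rounding lemma is applicable throughout.

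The main obstacle will be verifying that rounding does not destroy invariant (i), that is, that the rounded rows continue to describe pairwise disjoint partial-edge partitions of $\{1, \ldots, k+1\}$. Baranyai's elegant resolution is to index the columns by richer ``signatures'' (recording each partial edge together with the number of future vertices it must still absorb), so that preservation of bipartite marginals under the rounding lemma automatically forces the row partition property. Working through this bookkeeping carefully is the most delicate piece of the proof; once in place, the 1-factorization simply falls out at $k = n$.
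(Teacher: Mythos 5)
The paper does not prove this lemma at all: it is quoted verbatim as Baranyai's theorem with a citation to \cite{Bar}, so there is no in-paper argument to compare against. Your proposal is an attempt to reprove the cited classical result, and the route you sketch (vertex-by-vertex induction, fractional assignment, integer rounding via total unimodularity / integral flows) is precisely Baranyai's original argument, so in that sense you are following the same proof the authors are pointing to.

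That said, as written the proposal has a gap, and it is exactly the one you flag yourself: you never verify that rounding preserves invariant (i). The difficulty is an artifact of how you apply the rounding lemma --- to the entire matrix $X^{(k+1)}$ --- rather than to the \emph{increment}. The clean version rounds only the decision of which partial edge absorbs vertex $k+1$: set up a flow with one unit leaving each class $i$, routed to a column $S$ (with $|S|<r$) with capacity $X^{(k)}_{i,S}$ and fractional value $X^{(k)}_{i,S}\cdot\frac{r-|S|}{n-k}$. The row sums are then exactly $1$ because each class consists of exactly $n/r$ partial edges (counting empty ones) covering $k$ vertices, so $\sum_S X^{(k)}_{i,S}\frac{r-|S|}{n-k}=\frac{r\cdot n/r-k}{n-k}=1$; the column sums are $\binom{n-k-1}{r-|S|-1}$, which matches Pascal's rule applied to the old marginal $\binom{n-k}{r-|S|}$. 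Integrality of the flow polytope then puts $k+1$ into exactly one partial edge of each class, so the partition property of each row is preserved automatically and no ``richer signatures'' are needed. Note also that this computation shows where $r\mid n$ actually enters: it guarantees that each class has the integer number $n/r$ of partial edges, hence that the fractional row sums are exactly $1$. It is not needed to make the column marginals $\binom{n-k}{r-|S|}$ integral --- binomial coefficients always are --- so your stated justification for the divisibility hypothesis is misplaced. At $k=n$ the marginal $\binom{0}{r-|S|}$ vanishes unless $|S|=r$, so every class is a partition of $[n]$ into $r$-sets and every $r$-set lies in exactly one class, which is the desired 1-factorization.
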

\noindent
(Clearly the number of  1-factors is equal to $\binom{n}{r}\frac{r}{n} = {n-1 \choose r-1}$.)
The result on factorization is helpful to deal with loose paths.

\begin{lemma}\label{lem:ac_path}
Let $r \in \N \setminus \{1\}$ and let $k \in \N$ be such that $2 \le k \le r$. For a loose $r$-uniform path $P_n$ on $n$ vertices, 
$$
\AC^k_r (P_n) = O(n^{k-1}).
$$
Moreover, every agent visits every vertex of the path.
\end{lemma}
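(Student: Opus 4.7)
My plan is to construct an explicit strategy on the loose $r$-uniform path $P_n$ that achieves $k$-acquaintance in $O(n^{k-1})$ rounds while ensuring every agent visits every vertex of the path. The key structural observation is that the underlying graph $\tilde{P}_n$ (the graph on which agents actually swap) is a chain of cliques $K_r$ glued at shared vertices, which in particular is a tree. By Claim~\ref{claim:bst_route} applied to $\tilde{P}_n$, or a direct bubble-sort-type routing argument along this chain, any target permutation of the $n$ agents can be realized on $\tilde{P}_n$ in $O(n)$ matching swaps.

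First, I would establish the ``every agent visits every vertex'' claim using a sweep procedure analogous to the spine walk in the proof of Lemma~\ref{lem:structure}: alternate rounds apply swaps on odd-indexed cliques and even-indexed cliques of $P_n$, forcing the agents initially at odd positions to travel monotonically toward one endpoint and back, and similarly for even positions. In $O(n)$ rounds, every agent makes a full traversal of the path. This also already handles the case $k=2$, since during a single sweep every pair of agents shares some edge at some round, giving $\AC^2_r(P_n) = O(n)$.

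For $k \ge 3$, I would combine the sweep with Baranyai's theorem (Lemma~\ref{lem:factors}). Padding $n$ up to a multiple of $r$ at constant asymptotic cost, $K_n^r$ admits a $1$-factorization into $N = \binom{n-1}{r-1} = \Theta(n^{r-1})$ $1$-factors, each partitioning $[n]$ into $n/r$ pairwise disjoint $r$-subsets. For each $1$-factor $F = \{A_1, \dots, A_{n/r}\}$, I would place the agents of each $A_i$ into the positions of some edge of $P_n$; because consecutive path-edges share a single vertex but factor parts are disjoint, we cannot realize all parts simultaneously, but two configurations suffice---one using the odd-indexed edges, one using the even-indexed edges. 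For $k<r$ one instead selects a covering subfamily of only $O(n^{k-1})$ factors, which by a standard covering-design argument (or by averaging over random factors) ensures that every $k$-subset of $[n]$ lies in some realized $r$-subset.

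The main obstacle is the amortization of the routing cost: running the $O(n)$-round routing independently for each of the $\Theta(n^{k-1})$ required configurations would yield $O(n^k)$, one factor of $n$ too many. I would overcome this by ordering the configurations so that consecutive permutations differ only by a constant number of local swaps within a few cliques, so that transitions cost $O(1)$ rounds on average, with each sweep across one factor simultaneously making progress on many nearby factors via the cyclic structure of the path and the sweep. Combined with the covering construction, this yields the desired total bound of $O(n^{k-1})$ rounds, while the sweep component immediately delivers the ``every agent visits every vertex'' conclusion.
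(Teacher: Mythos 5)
Your opening steps are sound and match the paper: the $O(n)$ routing of an arbitrary permutation on the underlying tree via Claim~\ref{claim:bst_route}, the odd/even sweep giving the ``every agent visits every vertex'' claim, and the $k=2$ case. The gap is in the heart of the argument for $k\ge 3$, namely the amortization step, and it is not a fixable technicality but a counting obstruction. You propose to realize $\Theta(n^{k-1})$ static configurations (a covering subfamily of $1$-factors of $K_n^r$) and to link consecutive configurations by ``a constant number of local swaps within a few cliques.'' But a transition that alters the occupancy of only $O(1)$ edges of $P_n$ creates at most $O(1)\cdot\binom{r}{k}=O(1)$ newly realized $k$-subsets, so a schedule of $O(n^{k-1})$ such transitions acquaints only $O(n^{k-1})$ of the $\binom{n}{k}=\Theta(n^k)$ $k$-tuples. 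To cover everything you need each round to produce $\Theta(n)$ new acquainted $k$-tuples on average, which forces $\Theta(n)$ agents to be in genuinely new relative positions each round; since a matching round moves each agent by at most one clique, this cannot be achieved by hopping between essentially arbitrary covering configurations. No ordering of the configurations can rescue the scheme as described.

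The paper sidesteps this by applying Baranyai's theorem (Lemma~\ref{lem:factors}) not to $K_n^r$ but to $K_N^{k-1}$, factorizing the \emph{$(k-1)$-subsets} into $\binom{N-1}{k-2}=\Theta(n^{k-2})$ $1$-factors whose parts are disjoint ``teams'' of $k-1$ agents. For each factor, the teams are gathered onto the path in $O(n)$ rounds and then swept as super-vertices exactly as in Lemma~\ref{lem:structure}; during one $O(n)$-round sweep each team stays together and passes every other agent on a common hyperedge, so that team collects all $n$ possible completions to a $k$-tuple. Thus a single sweep acquaints $\Theta(n^2)$ $k$-tuples, i.e.\ $\Theta(n)$ per round, and $\Theta(n^{k-2})$ sweeps suffice, giving $O(n^{k-1})$ in total. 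The dynamic sweep, with the $(k-1)$-element teams held fixed while the $k$-th element varies over all agents, is precisely the mechanism your static covering-design formulation is missing; if you want to salvage your write-up, replace the $r$-uniform factorization and the covering family by this team-based construction.
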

\begin{proof}
First, we claim that the number of steps, $f(n)$, needed for agents to be moved to any given permutation of $[n]$, the vertex set of $P_n$, satisfies $f(n)=O(n)$. For that we will use the fact that the underlying graph $\tilde{P_n}$ contains a classic path (graph) $\bar{P_n}$ of length $n$ as a subgraph. Now, $\bar{P_n}$ can be split into two sub-paths of lengths $\lceil n/2 \rceil$ and $\lfloor n/2 \rfloor$, respectively, and then agents are directed to the corresponding sub-paths that takes $O(n)$ steps by Claim~\ref{claim:bst_route}. We proceed recursively, performing operations on both paths (at the same time) to get that
$$
f(n) = O(n) + f(\lceil n/2 \rceil) = O(n).
$$
The claim holds.

Let $N$ be the smallest integer such that $N \ge n$ and $N$ is divisible by $k-1$. Clearly, $N-n \le k-2$. Lemma~\ref{lem:factors} implies that $K_N^{k-1}$ on the vertex set $V(K_N^{k-1}) = [N] \supseteq [n] = V(P_n)$ is 1-factorizable. Let $E_i$ for $1\le i \le \binom{N-1}{k-2}$ be the corresponding 1-factors. Obviously, $|E_i| = \frac{N}{k-1}$ for each $i$. Let $F_i  = \{ e \cap [n] : e\in E_i \}$. This way we removed all vertices that are not in $P_n$ and so $F_i \subseteq V(P_n)$ for each $1\le i \le \binom{N-1}{k-2}$ and every $k$-tuple from $V(P_n)$ is in some $F_i$. Call every member of $F_i$ a \emph{team}. Moreover, let us mention that, since at most $k-2$ vertices are removed, no team reduces to the empty set and so $|F_i| = \frac{N}{k-1}$ for each $i$.

Let us focus for a while on $F_1$. By the above claim we can group all teams of $F_1$ on $P_n$ in $O(n)$ rounds (using an arbitrary permutation of teams, one team next to the other; see also Remark~\ref{rem:comment}). We treat teams of agents as \emph{super-vertices} that form an (abstract) \emph{super-path} of length $\ell = |F_1| = N/(k-1)$. As in the proof of Lemma~\ref{lem:structure}, we use the strategy ensuring that within $O(n)$ rounds every team of agents gets acquainted with every other agent and, moreover, that every agent visits every vertex of $P$. Let $u_i$ be the $i$-th super-vertex corresponding to the $i$-th team. For $1 \le i \le \ell-1$, let $e_i = u_iu_{i+1}$. The strategy is divided into $2\ell$ phases, each consisting of $O(1)$ rounds. On odd-numbered phases, swap teams agents on all odd-indexed edges. On even-numbered phases, swap teams agents on all even-indexed edges. This guarantees that every pair of teams eventually meets, as discussed in the proof of Lemma~\ref{lem:structure}. The only difference is that this time swapping teams usually cannot be done in one round but clearly can be done in $O(1)$ rounds. (For example, the first agent from the second team swaps with each agent from the first team to move to the other side. This takes at most $k-1$ swaps. Then the next agent from the second team repeats the same, and after at most $(k-1)^2$ rounds we are done.) Moreover, this procedure can be done in such a way that each team meets each agent from the passing team on some hyperedge of $P_n$. This might require additional $O(1)$ rounds but can easily be done.

After $O(n)$ steps, each team from $F_1$ consisting of $k-1$ agents, is $k$-acquainted with each agent. Repeating this process for each $F_i$ yields the desired strategy for the total of $\binom{N-1}{k-2} \cdot O(n) = O(n^{k-1})$ rounds. 
\end{proof}

\begin{rem}\label{rem:comment}
As we already mentioned, in the previous proof, the order in which teams are distributed for a given 1-factor was not important. One can fix an arbitrary order or, say, a random one that will turn out to be a convenient way of dealing with some small technical issue in the proof of Theorem~\ref{thm:main3}b.
\end{rem}

\smallskip

It remains to prove the two theorems.

\begin{proof}[Proof of Theorem~\ref{thm:main3}a]
As in the proof of Theorem~\ref{thm:main1}, we use two-round exposure technique: $H \in H_r(n,p)$ can be viewed as the union of two independently generated random hypergraphs $H_1 \in H_r(n,p_1)$ and $H_2 \in H_r(n,p_2)$, with $p_1 := \frac {\eps}{2} \cdot \frac {(r-1)!\log n}{n^{r-1}}$ and 
$$
p_2 := \frac {p-p_1}{1-p_1} \ge p-p_1 = \left(1 + \frac {\eps}{2} \right) \frac{(r-1)!\log n}{n^{r-1}}.
$$
It follows from Lemma~\ref{lem:alg} that a.a.s.\ $H_1$ has a long loose path  $P$ of length $\delta n$ (and thus $H$ has it, too), where $\delta \in (0,1)$ can be made arbitrarily close to 1, as needed. 

Now, we will use $H_2$ to show that a.a.s.\ every vertex not on the path $P$ belongs to at least one edge of $H_2$ (and thus $H$, too) that intersects with $P$. Indeed, for a given vertex $v$ not on the path $P$, the number of $r$-tuples containing $v$ and at least one vertex from $P$ is equal to
\begin{eqnarray*}
s &=& {n - 1 \choose r-1} - {(1-\delta)n - 1 \choose r-1} = (1+o(1)) \left( \frac {n^{r-1}} {(r-1)!} - \frac {((1-\delta)n)^{r-1}} {(r-1)!} \right) \\
&=& (1+o(1)) \frac {n^{r-1}} {(r-1)!} \left( 1 - (1-\delta)^{r-1} \right),
\end{eqnarray*}
and so the probability that none of them occurs as an edge in $H_2$ is
$$
(1-p_2)^s \le \exp \left( - (1+o(1)) \left(1 + \frac {\eps}{2} \right) \frac{(r-1)!\log n}{n^{r-1}} \cdot \frac {n^{r-1}} {(r-1)!} \left( 1 - (1-\delta)^{r-1} \right) \right) = o(n^{-1}),
$$
after taking $\delta$ close enough to 1. The claim holds by the union bound.

The rest of the proof is straightforward. We partition all agents into $(k+1)$ groups, $A_1, A_2, \ldots, A_{k+1}$, each consisting of at least $\lfloor n/(k+1) \rfloor$ agents. Since $\delta$ can be arbitrarily close to 1, we may assume that $\lceil n/(k+1) \rceil> (1-\delta) n$. For every $i \in [k+1]$, we route \emph{all} agents from all groups but $A_i$ and some agents from $A_i$ to $P$, and after $O(n^{k-1})$ rounds they get acquainted by Lemma~\ref{lem:ac_path}. Since each $k$-tuple of agents intersects at most $k$ groups, it must get acquainted eventually, and the proof is complete. 
\end{proof}

\begin{proof}[Proof of Theorem~\ref{thm:main3}b]
First we assume that $\omega = \omega(n)  \le n^{k-1}$ and $r-1$ divides $n-1$. As usual, we consider two independently generated random hypergraphs $H_1 \in H_r(n,p_1)$ and $H_2 \in H_r(n,p_2)$, with $p_1 := \frac {\omega}{2} \cdot \frac {\log n}{n^{r-1}}$ and $p_2 > p-p_1 = p_1$. It follows from Lemma~\ref{lem:hamilton} that a.a.s.\ $H_1$ has a Hamiltonian path $P=(v_1, v_2, \ldots, v_n)$. 

Let $C$ be a large constant that will be determined soon. We cut $P$ into $(\omega/C)^{1/(k-1)}$ loose sub-paths, each of length $\ell := (C/\omega)^{1/(k-1)} n$; there are $r-2$ vertices between each pair of consecutive sub-paths that are not assigned to any sub-path for a total of at most $r \ell = r \cdot (\omega/C)^{1/(k-1)} \le r n C^{-1/(k-1)}$ vertices,  since $\omega \le n^{k-1}$. This can be made smaller than, say, $\lceil n/(k+1) \rceil$, provided that $C$ is large enough. We will call agents occupying these vertices to be \emph{passive}; agents occupying sub-paths will be called \emph{active}. Active agents will be partitioned into units, with respect to the sub-path they occupy. Every unit performs (independently and simultaneously) the strategy from Lemma~\ref{lem:ac_path} on their own sub-path. After $s := O(C n^{k-1}/\omega)$ rounds, each $k$-tuple of agents from the same unit will be $k$-acquainted. 

Now, we will show that a.a.s.\ every $k$-tuple of active agents, not all from the same unit, gets acquainted. Let us focus on one such $k$-tuple, say $f$. If one starts from a random order in the strategy from Lemma~\ref{lem:ac_path} (see Remark~\ref{rem:comment}), then it is clear that with probability $1-o(n^{k})$ agents from $f$ visit at least $s / 2 = Cn^{k-1}/(2\omega)$ different $k$-tuples of vertices. Hence, by considering vertices that are not on paths agents from $f$ walk on, we get that with probability $1-o(n^{k})$, the number of distinct $r$-tuples visited by agents from $f$ is at least 
$$
t := \frac {s}{2} {n - k \ell \choose r-k} \ge \frac {Cn^{k-1}}{2\omega} \cdot \frac { (0.9 n)^{r-k} }{ (r-k)!} 
= \frac {C (0.9)^{r-k} }{2(r-k)!} \cdot \frac { n^{r-1} }{ \omega}.
$$
Considering only those edges in $H_2$, the probability that agents from $f$ never got acquainted is at most
$$
(1-p_2)^{t} \le \exp \left( - \frac {\omega \log n}{n^{r-1}} \cdot \frac {C (0.9)^{r-k} }{2(r-k)!} \cdot \frac { n^{r-1} }{ \omega} \right) = \exp \left( - \frac {C (0.9)^{r-k} }{2(r-k)!} \log n \right) = o(n^{-k}),
$$
for $C$ sufficiently large. Since there are at most ${n \choose k} = O(n^k)$ $k$-tuples of agents, a.a.s.\ all active agents got acquainted.

It remains to deal with passive agents which can be done the same way as in the proof of Theorem~\ref{thm:main3}a. We partition all agents into $(k+1)$ groups, and shuffle them so that all get acquainted. Moreover, if $r-1$ does not divide $n-1$, we may take $N$ to be the largest integer such that $N \le n$ and $r-1$ divides $N-1$, and deal with a loose path on $N$ vertices. Clearly, a.a.s.\ each of the remaining $O(1)$ vertices belong to at least one hyperedge intersecting with $P$. We may treat these vertices as passive ones and proceed as before.

Finally, suppose that $\omega = \omega(n) > n^{k-1}$. Since $\AC^k_r( H_r(n,p) )$ is a decreasing function of $p$, 
$$
\AC^k_r( H_r(n,p) ) \le \AC^k_r( H_r(n, (\log n) / n^{r-k}) ) = O(1),
$$ 
by the previous case and the theorem is finished.
\end{proof}

\section{Concluding remarks}

In this paper, we showed that as soon as the random graph process $\G(n,m)$ becomes connected its acquaintance time is linear, which implies that for $G \in G(n,p)$ the following property holds a.a.s.: if $G$ is connected, then $\AC(G) = O(\log n / p)$. This confirms the conjecture from~\cite{bst} in the strongest possible sense. On the other hand, the trivial lower bound~(\ref{eq:trivial_lower}) is off by a factor of $\log n$, namely, for $G \in G(n,p)$ the following property holds a.a.s.: if $G$ is connected, then $\AC(G) = \Omega(1 / p)$. It is worth to mentioning that it was shown in~\cite{KMP} that for dense random graphs (that is, graphs with average degree at least $n^{1/2+\eps}$ for $\eps>0$) the upper bound gives the right order of magnitude. It would be interesting to strengthen these bounds and investigate the order of $\AC(G)$ for sparse random graphs.

Of course, an analogous question could be asked (and probably should) for hypergraphs. It would be desired to improve the bounds in Theorem~\ref{thm:main3}. Moreover, in Theorem~\ref{thm:main3}, it is assumed that $p$ is by a multiplicative factor of $(1+\eps)$ larger than the threshold for connectivity ($\eps>0$ is any constant, arbitrarily small). It remains to investigate the behaviour of $\AC^k_r(H)$ in the critical window, preferably by investigating the random hypergraph process as it was done for graphs in Theorem~\ref{thm:main1}.

It might also be of some interest to study the acquaintance time of other models of random graphs such as, for example, power-law graphs, preferential attachments graphs, or geometric graphs. The latter was recently considered  in~\cite{MP}. Moreover, let us mention that there are some number of interesting open problems for deterministic graphs, see, for example~\cite{bst}.

\end{document}